\newcommand{\git}{\mathbin{
  \mathchoice{/\mkern-6mu/}% \displaystyle
    {/\mkern-6mu/}% \textstyle
    {/\mkern-5mu/}% \scriptstyle
    {/\mkern-5mu/}}}% \scriptscriptstyle
\numberwithin{equation}{section}
\newtheorem{proposition}{Proposition}[section]
\newtheorem{lemma}[proposition]{Lemma}
\newtheorem{theorem}[proposition]{Theorem}
\theoremstyle{definition}
\newtheorem{remark}[proposition]{Remark}
\newtheorem{definition}[proposition]{Definition}
\newtheorem{example}[proposition]{Example}
\DeclareMathOperator{\End}{End}
\DeclareMathOperator{\Id}{Id}
\DeclareMathOperator{\tr}{tr}
\DeclareMathOperator{\Aut}{Aut}
\DeclareMathOperator{\Proj}{Proj}
\DeclareMathOperator{\Hom}{Hom}
\DeclareMathOperator{\Grad}{Grad}
\DeclareMathOperator{\Spec}{Spec}
\DeclareMathOperator{\ch}{ch}
\DeclareMathOperator{\rk}{rk}
\DeclareMathOperator{\Ima}{Im}
\DeclareMathOperator{\Rea}{Re}
\DeclareMathOperator{\Lie}{Lie}
\DeclareMathOperator{\Coh}{Coh}
        \DeclareMathOperator{\uMaps}{\underline{Maps}}
        \DeclareMathOperator{\Charges}{PStab}
        \newcommand{\cP}{{\mathcal P}}
        \newcommand{\cS}{{\mathcal S}}
        \newcommand{\fl}{{\mathfrak l}}
        \newcommand{\abs}[1]{\left|  #1\right| } %valor absoluto
\newcommand{\R}{\mathbb{R}}
\newcommand{\C}{\mathbb{C}}
\newcommand{\Gbb}{\mathbb{G}}
\newcommand{\Z}{\mathbb{Z}}
\newcommand{\Hbb}{\mathbb{H}}
\newcommand{\Q}{\mathbb{Q}}
\newcommand{\pr}{\mathbb{P}}
\renewcommand{\epsilon}{\varepsilon}
\newcommand{\scC}{\mathcal{C}}
\newcommand{\scA}{\mathcal{A}}
\newcommand{\mft}{\mathfrak{t}}
\newcommand{\mfg}{\mathfrak{g}}
\newcommand{\mfk}{\mathfrak{k}}
\renewcommand{\L}{\mathcal{L}}
\newcommand{\X}{\mathcal{X}}
\newcommand{\Y}{\mathcal{Y}}
\newcommand{\scS}{\mathcal{S}}
\newcommand{\ddbar}{\partial\overline{\partial}}
\renewcommand{\phi}{\varphi}
        \newcommand{\cX}{{\mathcal X}}
\title[Stability conditions in geometric invariant theory]{Stability conditions in geometric invariant theory}
\author[Ruadha\'i Dervan, with an appendix by Andr\'es Ib\'a\~nez N\'u\~nez]{Ruadha\'i Dervan  \\ \\ { with an appendix by Andr\'es Ib\'a\~nez N\'u\~nez}}
\address{School of Mathematics and Statistics, University of Glasgow, University Place, Glasgow G12 8QQ, United Kingdom}\email{ruadhai.dervan@glasgow.ac.uk}
\address{Andr\'es Ib\'a\~nez N\'u\~nez, Mathematical Institute, University of Oxford, Andrew Wiles Building, Radcliffe Observatory Quarter (550), Woodstock Road, Oxford
OX2 6GG, United Kingdom} \email{ibaneznunez@maths.ox.ac.uk}
\begin{document}

\begin{abstract} We explain how structures analogous to those appearing in the theory of stability conditions on abelian and triangulated categories arise in geometric invariant theory. This leads to an axiomatic notion of a central charge on a scheme with a group action, and ultimately to a notion of a stability condition on a stack analogous to that on an abelian category.  In an appendix by A.  Ib\'a\~nez N\'u\~nez, it is explained how central charges can be viewed through the graded points of a stack. We use these ideas to introduce an axiomatic notion of a stability condition for polarised schemes, defined in such a way that K-stability is a special case.

In the setting of axiomatic geometric invariant theory  on a smooth projective variety, we produce an analytic counterpart to stability and explain the role of the Kempf-Ness theorem. This clarifies many of the structures involved in the study of deformed Hermitian Yang-Mills connections, $Z$-critical connections and $Z$-critical K\"ahler metrics. \end{abstract}

%\date{\today}

\maketitle

\section{Introduction}

Mumford's geometric invariant theory gives a method of constructing quotient spaces in algebraic geometry, with many important applications to the construction of moduli spaces \cite{GIT}. These quotients parametrise \emph{polystable} orbits---the unstable orbits are discarded to ensure a separated quotient. 

Perhaps the most powerful outcome of Mumford's work was not geometric invariant theory itself, but rather the introduction of the  notion of \emph{stability}, which has been fundamental to an enormous amount of further work. We mention two examples. The first is the notion of \emph{slope stability} of a coherent sheaf, which  led to Rudakov's abstraction to stability on general \emph{abelian categories} (where there is no possible interpretation via geometric invariant theory) \cite{rudakov} and ultimately to Bridgeland's very general theory of stability conditions on \emph{triangulated categories} \cite{bridgeland}, building on ideas of Douglas motivated by string theory \cite{douglas}. Here geometric invariant theory is used more as a motivational philosophy rather than as a direct tool.

The second example we give is Mumford's construction of the moduli space of (stable) curves \cite{mumford}, and its higher dimensional (and largely conjectural) analogue of \emph{K-stability} of a polarised variety \cite{tian, donaldson-toric}. Here again while Mumford's theory can successfully be applied in the special case of curves, in higher dimensions K-stability is only \emph{modelled on}  geometric invariant theory, which is similarly used as a motivational philosophy. 

Bridgeland's notion of a stability condition is the appropriate one in the presence of some linearity, essentially due to the necessity of the presence of abelian categories, but many interesting problems in algebraic geometry have no such linearity; this is notably the case for the theory of stability of polarised varieties. Roughly speaking, in Bridgeland's theory stability is only defined for certain abelian subcategories $\scA$ of the triangulated category (satisfying various hypotheses), where stability involves choosing a \emph{central charge} $Z: \scA \to \C$ and demanding that for subobjects $S \subset E$  the \emph{phase inequality} $$\arg Z(E) > \arg Z(S)$$ holds (with $\arg$ denoting the argument of a complex number); this type of inequality is precisely of a form analogous to those arising in the traditional theory of slope stability of coherent sheaves. It is then important to assume that central charges are complex valued, as this is a basic step in Bridgeland's proof that the space of stability conditions on a triangulated category forms a complex manifold. Just as Bridgeland's theory gives a general way of understanding stability in the presence of a sort of linearity, it is natural to ask if one can extend the theory beyond the linear setting.

This note revisits some of the foundational ideas of geometric invariant theory, with the aim of developing a general parallel of Bridgeland's work. We accomplish roughly the easier half of this: motivated by a new notion of a central charge associated to a group action on a projective scheme, we introduce a notion of a central charge on a general stack, motivated by Halpern-Leistner's approach to geometric invariant theory on stacks \cite{DHL}. When this stack is the stack of coherent sheaves, we then explain how this relates to the classical notion of a central charge. Our real interest, however, is in the stack of polarised schemes:  here we use this to define an \emph{axiomatic notion of stability for polarised schemes}. As we explain, K-stability is then a special case of $Z$-\emph{stability}, with $Z$ a central charge. An \emph{ad hoc} notion of stability for polarised varieties was introduced in \cite{stabilityconditions}, and our motivation here is to give a more axiomatic approach to much the same notion.

In an appendix by A.  Ib\'a\~nez N\'u\~nez, the relationship with Halpern-Leistner's notion of the stack of graded points of a stack is discussed and explained in detail, and a thorough stack-theoretic treatment of the notion of a central charge is given. These results emphasise that central charges are natural objects associated to stacks. In addition, it is observed there that the space of central charges naturally has the structure of a complex vector space, proving a basic (loosely, abelian) counterpart to the complex-manifold structure of the space of stability conditions on a triangulated category.

We emphasise that this only solves half of the problem, really giving a stacky analogue of Rudakov's notion of stability on an abelian category. To generalise Bridgeland's theory is considerably more challenging---the closest analogy is the requirement to extend from coherent sheaves to \emph{complexes} of coherent sheaves, and it is not clear what the right analogue of a complex should be for more general stacks---especially the stack of polarised schemes. The author expects that the right categorical generalisation of K-stability---parallel to stability conditions on triangulated categories---should involve a larger overlying categorical structure, with stability defined then for appropriate substacks of this larger structure. On these substacks, stability should precisely require choosing a central charge on the stack, with stability then meaning what we introduce in the present work.

While this more categorical generalisation of K-stability is speculative, general notions of stability are of interest even for a fixed polarised variety, where the lack of ``global'' structures analogous to Bridgeland stability conditions should be less problematic. The reason for this interest is in links with geometric partial differential equations and moment maps, as we explain in more detail below. Developing the theory purely algebraically appears to be very challenging, and we leave this for future work; for example---away from the important special case of Fano varieties \cite{xu}---basic questions such as Zariski openness of the stable locus are completely open even for K-stability of polarised varieties. 

\subsection*{Moment maps} Beyond the introduction of the general notion of stability in algebraic geometry, the most powerful outcomes of Mumford's geometric invariant theory arose through resulting links between algebraic and differential geometry. These arise through the \emph{Kempf--Ness theorem} \cite{kempf-ness, kirwan}, which states that \emph{polystability} of an orbit can be characterised through the exisence of zeroes of \emph{moment maps}, as used in \emph{symplectic} geometry. This characterisation then descends to a homeomorphism of the algebraic and symplectic quotients by Kirwan \cite{kirwan}. The notions of slope stability of holomorphic vector bundles and K-stability of smooth polarised varieties---which can be thought of as infinite-dimensional analogues of stability in geometric invariant theory---then have analytic counterparts through the existence of \emph{Hermitian Yang--Mills connections} and \emph{constant scalar curvature K\"ahler metrics}, which can each be viewed as zeroes of infinite-dimensional moment maps \cite{atiyah-bott, fujiki, donaldson-moment}. Thus these moment map equations give differential-geometric approaches to moduli problems in algebraic geometry. 

The second goal of this note is to give---in light of the axiomatic notion of a central charge in geometric invariant theory---a differential-geometric counterpart to central charges and resulting stability conditions, motivated by (and using) the Kempf--Ness theorem on a smooth projective variety. For a central charge $Z$ we define a \emph{complex moment map} and use it to define \emph{$Z$-critical points}, and under a strong ``subsolution'' hypothesis we prove that the existence of $Z$-critical points is equivalent to $Z$-polystability---the axiomatic notion of stability defined using the central charge. 

Infinite-dimensional counterparts of this idea have become increasing prominent in recent years, notably for equations that arose in string theory and mirror symmetry. The most well-understood of these is the \emph{deformed Hermitian Yang-Mills equation}, which is the mirror of the \emph{special Lagrangian equation} under SYZ mirror symmetry \cite{LYZ, MMMS}. The deformed Hermitian Yang-Mills equation roughly corresponds to stability of coherent sheaves with respect to a certain central charge, and for more general central charges one obtains the condition for a connection on a holomorphic vector bundle to be a $Z$\emph{-critical connection}, as introduced with McCarthy and Sektnan \cite{DMS}. A parallel, more challenging analogue for smooth polarised varieties was introduced in \cite{stabilityconditions,momentmaps}, through the theory of $Z$-\emph{critical K\"ahler metrics}, linked with the notion of $Z$\emph{-stability} of the polarised variety \cite{stabilityconditions}. While---with the notable exception of deformed Hermitian Yang--Mills connections on line bundles \cite{chen, collins-jacob-yau, jacob-yau}---it remains an open problem to characterise existence of solutions of these partial differential equations in terms of the associated notions of stability, the present work gives a finite-dimensional analogue of these results which does provide evidence for generalisations of the Kempf--Ness theorem to the aforementioned infinite-dimensional settings. In joint work with Hallam providing a sequel to the present note, a geometric recipe is given which canonically associates  complex moment maps to central charges in the categories of polarised manifolds and holomorphic vector bundles, building on some of the ideas presented here \cite{momentmaps}.

In both the variety and coherent sheaf settings, the $Z$-critical equation can be associated to very general classes of central charges, but basic properties---such as ellipticity of the partial differential equation---cannot hold in complete generality. This is best understood in the (smooth) coherent sheaf setting, where the $Z$-\emph{subsolution} condition governs ellipticity of the equation along with various other geometric properties of the equation \cite{collins-jacob-yau, DMS, takahashi}. This subsolution condition---which as we explain here has a finite dimensional analogue---is a positivity condition, and it seems plausible to suggest that these notions are related to the various axioms of Bridgeland's stability conditions, which themselves can be thought of as positivity conditions (as an explicit example, the existence of Harder--Narasihman filtrations assumed in the definition of a Bridgeland stability condition is a consequence of a kind of convexity, which is in turn a consequence of a kind of positivity). 

 The analytic state-of-the-art in both the variety and coherent sheaves cases is closer to stability on an abelian category than a triangulated one, and it remains an important problem to extend the notion of a $Z$-critical connection to more general complexes. The manifold setting is more mysterious again, but the perspective presented here, along with the parallels between the coherent sheaf and variety theories, provides  analytic motivation for the appearance of an analogue of the derived category of coherent sheaves in the setting of polarised schemes. That is, given that one should expect the need to consider the triangulated setting and categorical obstructions to fully understand the existence of $Z$-critical connections in relation to Bridgeland stability, to understand the existence of $Z$-critical K\"ahler metrics one should also need to understand deeper categorical obstructions. Optimistically one may expect to be able to actually \emph{obtain} the right categorical extensions from a deep understanding of the analysis underlying the $Z$-critical K\"ahler condition, much as one would hope to be able to recover the notion of Bridgeland stability through understanding the analysis underlying the $Z$-critical connection condition. We remark also that there is evidence that Bridgeland stability may not be exactly the right condition to be equivalent to the existence of deformed Hermitian Yang--Mills connections \cite{collins-shi, jacob-sheu}, with this philosophy then suggesting that one may be able to obtain a similar algebro-geometric stability condition through the analysis underlying the deformed Hermitian Yang--Mills condition: the stability condition should be the one that is equivalent to existence of solutions of this equation.

In any case, our work here gives a finite-dimensional motivation for these geometric partial differential equations, and in particular gives a dictionary for passing from standard moment map constructions---such as geometric flows---to interesting infinite-dimensional ones. We thus emphasise that the main purpose of this note is to use geometric invariant theory to understand the right general axiomatic notion of stability for polarised schemes---and for general stacks---and to highlight how this viewpoint can be used to motivate the appearance of differential-geometric counterparts to abstract stability conditions. There are relatively few proofs,  with the focus instead being on providing definitions which we hope will motivate further work. We also hope that that these ideas clarify many of the basic structures appearing in recent work on general relationships between abstract stability conditions and geometric partial differential equations.

\subsection*{Remark}
Independent forthcoming work of Haiden--Katzarkov--Kontsevich--Pandit aims to develop an analytic counterpart to Bridgeland stability for general triangulated categories; they title their programme ``categorical K\"ahler geometry'' (see \cite{HKKP1, HKKP2} for precursors). Their programme has a substantial overlap with the ideas developed here, and indeed the $Z$-critical condition used here bears some similarity to equations that appear in their work. The author thanks F. Haiden and P. Pandit for discussions on these ideas.

\subsection*{Outline}
We discuss stability in geometric invariant theory in Section \ref{classical-GIT}, before turning to the axiomatic notion of $Z$-stability for groups actions on schemes in Section \ref{axiomatic} and on stacks in Section \ref{stacks}. Section \ref{examples} then explains the special cases of stability conditions on the stacks of coherent sheaves and polarised schemes. The analytic counterpart to $Z$-stability is described in Section \ref{sec:momentmaps} through the theory of complex moment maps, and we end with a discussion of various natural analytic structures in complex moment map theory in Section \ref{structures}.

\subsection*{Acknowledgements} I thank  Andr\'es Ib\'a\~nez N\'u\~nez, Frances Kirwan and John McCarthy for many helpful discussions on this topic and for their interest in these ideas, and I further thank Andr\'es Ib\'a\~nez N\'u\~nez for providing the appendix to this work. In addition I thank S\'ebastien Boucksom, Michael Hallam, Eloise Hamilton, Eveline Legendre and Lars Sektnan for their comments, Frances Kirwan and Xiaowei Wang for the suggestion to incorporate stacks and the referee for their helpful suggestions. I was funded by a Royal Society University Research Fellowship for the duration of this work (URF\textbackslash R1\textbackslash 201041).

\section{Stability conditions in geometric invariant theory} 

\subsection{The classical theory}\label{classical-GIT}
Let  $(X, L)$ be a polarised projective scheme, and let $G$ be a reductive group acting on $(X,L)$. We briefly recall some of the basic ideas of geometric invariant theory (GIT), which produces a quotient $X \git G$ of $X$ by $G$. The original reference is Mumford's book \cite{GIT}, and good surveys are given by Hashimoto \cite{hashimoto} and Thomas \cite{thomas}. Rather than parametrising all orbits, the quotient represents only \emph{polystable} orbits, which we now define.

Let $\lambda: \C^* \hookrightarrow G$ be a one-parameter subgroup. For a given $x \in X$, we call $$y = \lim_{t\to 0} \lambda(t).x$$ the \emph{specialisation} of $x$ under $\lambda$; we also say that $x$ \emph{degenerates} to $y$ under $\lambda$. As $y$ is fixed by the $\C^*$-action, there is a $\C^*$-action on the one-dimensional complex vector space $L_y$; this action is multiplication by $t^{\nu(y, \lambda)}$ for an integer $\nu(y, \lambda)\in \Z$ called the \emph{weight}. We think of the weight as an assignment $$(y, \lambda) \to \nu(y, \lambda) \in \Z,$$ where $\lambda: \C^* \hookrightarrow G_y$ is a one-parameter subgroup of the stabiliser $G_y$ of $y$.

\begin{definition} We say that $x \in X$ is
\begin{enumerate}[(i)]
\item \emph{semistable} if for all one-parameter subgroups $\lambda$ of $G$, we have ${\nu(y, \lambda)} \geq 0$;
\item \emph{polystable} if for all one-parameter subgroups $\lambda$ of $G$, we have ${\nu(y, \lambda)} \geq 0$, with equality if and only if $ \lim_{t\to 0} \lambda(t).x \in G.x$; 
\item \emph{stable} if for all non-trivial one-parameter subgroups $\lambda$ of $G$, we have ${\nu(y, \lambda)} > 0$;
\item \emph{unstable} otherwise.
\end{enumerate}
Here in each case $y$ is the specialisation of $x$ under $\lambda$. These are conditions on the orbit  $G\cdot x$, so we also that the orbit $G\cdot x$ is semistable, polystable or stable respectively.
\end{definition}

The general machinery of GIT produces a quotient space $X \git G$ which parametrises \emph{polystable orbits}. The first important step in this process is that both the stable locus $X^s$ and the semistable locus $X^{ss}$ are Zariski open. The next step is to prove the existence of a surjective morphism $X^{ss} \to X\git G$ such that distinct polystable orbits are mapped to distinct points in $X\git G$. That this map is well-defined is equivalent to the fact that the closure of each semistable orbit contains a \emph{unique} polystable orbit.

\begin{remark}This is not how stability in GIT is usually presented; rather, it is an equivalent characterisation provided by the Hilbert-Mumford criterion. More typically GIT is presented instead using invariant global sections of $L^k$, from which constructing the GIT quotient is almost tautological via the $\Proj$ construction. The Hilbert--Mumford then gives a way of geometrically interpreting polystable orbits \cite[Section 2]{GIT}.
 \end{remark}

 It will be useful to relate one-parameter subgroups to associated elements of the Lie algebra $\mfg$ of $G$. Consider first a maximal (complex) torus $T^{\C} \subset G$ with Lie algebra $\mft^{\C}$. The \emph{cocharacter lattice} has points consisting of the kernel of the exponential map $\exp: \mft^{\C} \to T^{\C}$; these are in bijection with the one-parameter subgroups $\C^*\to T^{\C}$. Splitting  $\mft^{\C} = \mft \oplus i\mft$, the $\R$-span of the cocharacter lattice is the real Lie algebra $\mft$ of the torus $T$, and the $\C$-span is the Lie algebra $\mft^{\C}$ itself. We denote by $\mft_{\Q} \subset \mft$ the $\Q$-span of the cocharacter lattice, so that the $\Q$-vector space $\mft_{\Q}\oplus i\mft_{\Q} \subset \mft^{\C}$ is a dense subset. We call the spaces $\mft_{\Q}$ and $\mft_{\Q}\oplus i\mft_{\Q}$ the collection of \emph{rational points} of $\mft$ and $\mft^{\C}$ respectively.

We next turn to the Lie algebra $\mfg$, where we say that $u\in\mfg$ is \emph{rational} if there exists a maximal torus $T^{\C}$ of $G$ such that $u \in \mft^{\C} \subset \mfg$ is a rational point. We denote by $\mfg_{\Q}$ the set of rational points of $\mfg$, and note that while this is not a $\Q$-vector space in general, $\mfg_{\Q}$ is nevertheless a dense subset of $\mfg$. Similarly, writing $G$ as the complexification of a maximal compact subgroup $K \subset G$, the Lie algebra $\mfg$ splits as $$\mfg \cong \mfk \oplus i\mfk,$$ where $\mfk = \Lie K$. With an analogous definition of $\mfk_{\Q}$, the $\R$-span of $\mfk_{\Q}$ is $\mfk$ itself, meaning $\mfk_{\Q} \subset \mfk$ is again dense.
 
As the stabiliser $G_x$ of $x \in X$ is not  reductive in general, it is not the case that $\mfg_{x,\Q}$ as defined above is dense in $\mfg_{x}$ for all $x\in X$. Instead we fix a maximal compact subgroup $K_x \subset G_x$, and let $K_x^{\C} \subset G_x$ denote its complexification. We then repeat the above discussion for the reductive group $K_x^{\C}$, and denote by  $ \mfk_{x,\Q}$ and  $\mfk^{\C}_{x,\Q}$ the rational points of $ \mfk_{x}$ and $ \mfk_{x}^{\C}$ respectively.

\begin{lemma}[Additivity] The weight function extends to a Lie algebra character   $$\nu(x, \cdot): \mfk_{x} \to \R.$$ \end{lemma}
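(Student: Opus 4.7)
The plan is to realise $\nu(x, \cdot)$ as a real multiple of the differential of the character by which the stabiliser $G_x$ acts on the one-dimensional fibre $L_x$. Since $\dim L_x = 1$, the $G_x$-action on $L_x$ is necessarily given by a group homomorphism $\chi_x: G_x \to \C^*$. Differentiating at the identity produces a Lie algebra homomorphism $d\chi_x: \mfg_x \to \C$; as $\C$ is abelian, $d\chi_x$ automatically vanishes on $[\mfg_x, \mfg_x]$, and so is a Lie algebra character in the strict sense.

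Restricting to $K_x \subset G_x$: by averaging one equips $L_x$ with a $K_x$-invariant Hermitian inner product, so $\chi_x|_{K_x}$ takes values in the unit circle $S^1 \subset \C^*$. Hence $d\chi_x|_{\mfk_x}$ takes values in $i\R$, and the formula
\[ \nu(x, \xi) := \frac{1}{i}\, d\chi_x(\xi), \qquad \xi \in \mfk_x, \]
defines an $\R$-linear map $\mfk_x \to \R$ which, being a restriction of $d\chi_x$, still vanishes on $[\mfk_x, \mfk_x]$.

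The remaining task is to verify compatibility with the integer weight function on one-parameter subgroups. Given $\lambda : \C^* \hookrightarrow K_x^{\C}$ (we work with $K_x^{\C}$ as in the preceding discussion, since $G_x$ need not be reductive), let $u_\lambda \in \mfk_x$ denote the infinitesimal generator of $\lambda|_{S^1}: S^1 \to K_x$. The relation $\lambda(t) \cdot v = t^{\nu(x, \lambda)} v$ for $v \in L_x$ specialises at $t = e^{i\theta}$ to $\lambda(e^{i\theta}) \cdot v = e^{i\theta \nu(x, \lambda)} v$, and differentiating at $\theta = 0$ yields $u_\lambda \cdot v = i \nu(x, \lambda) v$; hence $d\chi_x(u_\lambda) = i \nu(x, \lambda)$, matching the formula above. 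Consistency across different maximal tori is automatic because we have used the single globally defined character $d\chi_x$ on $\mfk_x$, and the density of the cocharacter lattice in the real Lie algebra of each maximal torus (together with conjugacy of maximal tori in $K_x$) shows the extension is uniquely pinned down by its original integer values. I do not foresee any real obstacle: the only point to keep track of carefully is the factor of $i$ mediating between the complex $\C^*$-action on $L_x$ and the real Lie algebra $\mfk_x$ on which we wish the character to live.
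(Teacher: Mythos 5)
Your proof is correct, but it takes a different route from the paper's. You construct the extension directly: since $G_x$ fixes $x$ and $L$ is $G$-linearised, $G_x$ acts on the line $L_x$ through a character $\chi_x\colon G_x\to\C^*$, the weight of any one-parameter subgroup $\lambda\subset G_x$ is by definition the exponent of $\chi_x\circ\lambda$, and $\tfrac{1}{i}d\chi_x|_{\mfk_x}$ is then a real-valued linear functional killing brackets (automatically, as any Lie algebra map into the abelian $\C$ does) which restricts to $\nu(x,\cdot)$ on the cocharacter lattice. The paper instead takes as input two properties of the integer-valued weight --- additivity on commuting cocharacters and conjugation-invariance $\nu(g(x),\lambda)=\nu(x,g\cdot\lambda)$ --- extends $\R$-linearly from the cocharacter lattice of a maximal torus using the first, and deduces vanishing on $[\mfk_x,\mfk_x]$ from the second. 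Your argument is shorter and makes well-definedness and uniqueness of the extension transparent, because everything is read off from the single globally defined object $d\chi_x$; in effect you are exhibiting the weight as the pairing with an honest integral character of $G_x$, which is exactly the shape of statement proved abstractly in the appendix (Proposition \ref{proposition: different ways to check linearity}, equivalence of (i) and (ii)) for general central charges. What the paper's route buys is that it isolates precisely the two properties (additivity and equivariance/conjugation-invariance) that survive when the line bundle is discarded, which is the point of the subsequent axiomatisation in Definition \ref{definition: central charge stack equivariant main body}: your argument is specific to weights arising from a linearisation and would not apply verbatim to an abstract central charge, whereas the paper's does. One small remark: the averaging step is not needed --- the image of the compact group $K_x$ under the continuous homomorphism $\chi_x$ is a compact subgroup of $\C^*$ and hence already lies in $S^1$ --- but this does not affect correctness.
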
 

Similarly the weight function extends to a \emph{complex}-valued Lie algebra character  $\nu(x, \cdot): \mfk^{\C}_{x} \to \C.$ That the weight is additive on the cocharacter lattice implies that it extends linearly to a linear function  $\mfk_{x} \to \R$. Denoting by $$g\cdot \lambda = g\circ \lambda \circ g^{-1}$$ the adjoint action of $G$ on one-parameter subgroups, associated to a one-parameter subgroup $\lambda \hookrightarrow G_x$, there is then an induced one-parameter subgroup $g\cdot \lambda \hookrightarrow G_{g(x)}.$ The conjugation-invariance property $$\nu(g(x), \lambda) = \nu(x, g\cdot \lambda)$$  implies from this that $\nu$ is actually a Lie algebra character, namely that  $\nu(x, [\cdot, \cdot]) = 0$.

We also recall an equivariance property that is more global in nature. Consider a connected subscheme $B \subset X$ such that $\lambda$ is a one-parameter subgroup contained in the stabiliser $G_x$ of $x$ for all (closed) points $x \in B$.
 
\begin{lemma}[Equivariant constancy] The value $\nu(x, \lambda)$ is independent of $x\in B$. 
\end{lemma}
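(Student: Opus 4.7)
The plan is to reduce the statement to the standard weight-space decomposition of an equivariant line bundle on a scheme with trivial $\C^*$-action.

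First I would observe that, since $\lambda$ lies in $G_x$ for every closed point $x \in B$, one has $\lambda(t).x = x$ for all $t \in \C^*$ and $x \in B$. The specialisation $y = \lim_{t\to 0}\lambda(t).x$ therefore equals $x$ itself, and $\nu(x,\lambda)$ is simply the weight of the induced $\C^*$-action on the one-dimensional fibre $L_x$. The claim thus reduces to showing that this integer is locally constant along $B$.

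The key step is to exploit the fact that $L|_B$ is naturally a $\C^*$-equivariant line bundle on $B$, where $\C^*$ acts trivially on $B$ and on the fibres of $L$ via $\lambda$. On any scheme with trivial $\C^*$-action, the category of $\C^*$-equivariant quasi-coherent sheaves is equivalent to that of $\Z$-graded quasi-coherent sheaves; applying this to $L|_B$ yields a decomposition $L|_B = \bigoplus_{n \in \Z} \L_n$ into weight subsheaves. As $L|_B$ is a line bundle (generically of rank one), at most one $\L_n$ can be non-zero at any given point of $B$, and the supports $\mathrm{Supp}(\L_n)$ are therefore pairwise disjoint and each open-and-closed in $B$. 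Connectedness of $B$ then forces exactly one $\L_n$ to be non-zero, and this unique $n$ equals $\nu(x,\lambda)$ for every $x \in B$.

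The main obstacle I anticipate is purely bookkeeping: making the weight decomposition rigorous for the possibly non-reduced scheme $B$ and for the non-affine situation. This is classical, amounting to the fact that a $\C^*$-action on a quasi-coherent sheaf $\F$ over a base with trivial action is the same datum as a $\Z$-grading of $\F$, obtained from the coaction $\F \to \F \otimes \C[t, t^{-1}]$; once this is in place, no further subtleties appear.
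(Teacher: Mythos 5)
Your argument is correct and is essentially the paper's own proof in expanded form: the paper's one-line justification---equivariantly trivialising the restriction of $L$ to an affine chart of $B$---is exactly the local statement that the $\C^*$-equivariant line bundle $L|_B$ (with trivial action on the base) is concentrated in a single weight, which you obtain from the graded-sheaf decomposition and then globalise using connectedness of $B$. No gaps.
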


This property is can be seen by equivariantly trivialising the restriction of $L$ to an affine chart of $B$.

\begin{example}
Fix a point $x \in X$, and consider two commuting one-parameter subgroups  $\lambda, \gamma$  of $G$. These induce a $(\C^*)^2$-equivariant morphism $\C^2 \to X$, where $\C^2$ is given the natural $(\C^*)^2$-action, and such that the image of $(\C^*)^2$ is contained in $G.x$. The image of $\C^2$ is the affine toric variety produced by taking the partial closure of the $(\C^*)^2$-orbit $(\C^*)^2.x \subset X$. The closure of the $(\C^*)^2$-orbit thus contains fixed points of $\gamma$ and $\lambda$, which we denote by $y_{\lambda, s}, y_{\gamma,t}$ respectively, and where we have parametrised using coordinates $s,t$ on $\C^2$. These families intersect at the image of the origin in $\C^2$:  $y_{\gamma, 0} = y_{\lambda, 0}$, as follows from commutativity of $\lambda, \gamma$. Equivariant constancy then implies that $\nu(y_{\lambda, s}, \lambda) = \nu(y_{\lambda, 0}, \lambda)$, and likewise for $\gamma$. This example is related to forthcoming work of Kirwan--Nanda on representations of $2$-quivers.
\end{example}

\subsection{Axiomatic stability conditions}\label{axiomatic} We wish to axiomatise the key structures of geometric invariant theory. Thus consider a projective scheme $X$ given the action of a reductive group $G$. Denote by $$\scS = \{ (y,\lambda) \ | \ y \in X \textrm{ and } \lambda \textrm{ is a one-parameter subgroup of } G_y\}/\sim,$$ where $(y,\lambda) \sim (z,\gamma)$ if there is a $g \in G$ with $g( y )= z$ and $g\cdot \lambda  = \gamma$. Thus $\scS$ is an upgrade of the space of orbits in $X$ by remembering also a one-parameter subgroup fixing the point in the orbit. There is a canonical injective map $$X \to \scS, \qquad x \to (x, \Id),$$ where  $\Id$ denotes the trivial one-parameter subgroup of $G$.

The set $\scS$ admits the structure of a directed graph, with vertices given by elements of $\scS$ and arrows given by equivariant specialisation. That is, we write $$(x, \zeta) \rightsquigarrow (y,\lambda)$$ if $\lambda$ commutes with $\zeta$ and $y$ is the specialisation of $x$ under $\lambda$. We write $x \rightsquigarrow  (y,\lambda)$ to mean that $(x, \Id) \rightsquigarrow (y,\lambda)$.

\begin{lemma} Equivariant specialisation is well-defined on orbits. \end{lemma}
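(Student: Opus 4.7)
The plan is to unpack what needs to be shown and then reduce everything to straightforward $G$-equivariance of limits and of conjugation. Well-definedness on orbits means: given $(x,\zeta) \rightsquigarrow (y,\lambda)$ and any representative $(x',\zeta') \sim (x,\zeta)$, one must exhibit a pair $(y',\lambda')\sim (y,\lambda)$ with $(x',\zeta') \rightsquigarrow (y',\lambda')$. So I would begin by fixing $g\in G$ witnessing the equivalence, i.e. with $g(x)=x'$ and $g\cdot\zeta = \zeta'$, and then propose the obvious candidate $y' := g(y)$ and $\lambda' := g\cdot\lambda$.

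The next step is to verify the four things required by the definitions. First, $\lambda'$ is a one-parameter subgroup of $G_{y'}$: since $\lambda \subset G_y$ and conjugation by $g$ carries $G_y$ onto $G_{g(y)} = G_{y'}$, this is immediate. Second, $(y',\lambda')\sim (y,\lambda)$ by the choice of $g$. Third, $\lambda'$ commutes with $\zeta'$: since conjugation by $g$ is a group automorphism, $\lambda'\zeta' = (g\lambda g^{-1})(g\zeta g^{-1}) = g\lambda\zeta g^{-1} = g\zeta\lambda g^{-1} = \zeta'\lambda'$. Fourth, $y' = \lim_{t\to 0}\lambda'(t).x'$: this follows from the $G$-equivariance of the action together with continuity of $g$, since
\[
\lim_{t\to 0}\lambda'(t).x' \;=\; \lim_{t\to 0} g\lambda(t)g^{-1}.g(x) \;=\; g\Bigl(\lim_{t\to 0}\lambda(t).x\Bigr) \;=\; g(y) \;=\; y'.
\]

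With these four checks one has $(x',\zeta')\rightsquigarrow (y',\lambda')$ and $(y',\lambda') \sim (y,\lambda)$, establishing that the arrow descends to a well-defined relation on the equivalence classes making up $\scS$. There is no real obstacle here: the only substantive input is that the $G$-action on $X$ is continuous, so that limits of one-parameter subgroup specialisations commute with the application of $g$, and this is automatic since $g: X\to X$ is a morphism of schemes. Everything else is formal manipulation of the defining equivalence relation and of the commutation condition under conjugation.
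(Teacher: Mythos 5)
Your proof is correct and follows the same route as the paper: conjugate the whole degeneration by the group element $g$ witnessing the equivalence and check that $(g(x), g\cdot\zeta) \rightsquigarrow (g(y), g\cdot\lambda) \sim (y,\lambda)$. The paper states this in one line, whereas you usefully spell out the four verifications (stabiliser membership, equivalence, commutation, and equivariance of the limit) that the paper leaves implicit.
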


\begin{proof} Let $(x,\zeta) \sim (z,\gamma)$, so that there is a $g \in G$ with $(g(x),g\cdot \zeta) =   (z,\gamma).$ If  $(x, \zeta) \rightsquigarrow (y,\lambda ),$ then    $$(z,\gamma) =  (g(x),g\cdot \zeta ) \rightsquigarrow (g(y),g \cdot \lambda) \sim (y, \lambda),$$ as required. \end{proof}

Fixing a maximal compact subgroup $K_x$ of $G_x$ with Lie algebra $\mfk_x$, as in Section \ref{classical-GIT}, it is then useful to consider $\scS$ as consisting of pairs $(x, v)$, where $v \in \mfk_{x}$ satisfies $\exp(v) = \Id$ and meaning that $v$ corresponds to a one-parameter subgroup of $G_x$.

\begin{definition}\label{definition: central charge stack equivariant main body} A \emph{central charge} is a function $Z: \scS \to \C$ such that:
\begin{enumerate}[(i)]
\item (Additivity) For a fixed $x \in X$, $Z$ induces a function $$Z(x,\cdot): \mfk_{x} \to \C$$ which is a Lie algebra character;
\item (Equivariant constancy) Suppose $B\subset X$ is a connected subscheme such that $\lambda$ is a one-parameter subgroup of $G_x$ for each $x \in X$. Then $Z(x,\lambda)$ is independent of $x \in X$.

\end{enumerate}
\end{definition}

Additivity in particular asks that $Z(x,\cdot )$ is additive under compositions of commuting one-parameter subgroups, so that it canonically extends to a linear function on $\mft_x = \Lie T_x$ for any maximal torus $T_x \subset K_x$. That $Z$ is well-defined on $\scS$ implies that $Z(x,v) = Z(x,g\cdot v)$ for $v \in \mfk_x$ and $g \in G_x$, so that its extension to $\mft_x$ induces an extension to $\mfk_x$ canonically, since maximal tori are always conjugate.

We record a motivating property for the definitions given here, for which we take $\zeta, \lambda$ commuting one-parameter subgroups. Their composition $\zeta \circ \lambda$ is then well-defined, and corresponds to exponentiating the sum of the associated elements of $\mfk$. In the following we view $Z$ as taking values on pairs of points and one-parameter subgroups.

\begin{lemma} Suppose $(x, \zeta) \rightsquigarrow (y,\lambda)$. Then $$Z(x,\zeta) + Z(y,\lambda) = Z(y, \zeta \circ \lambda).$$\end{lemma}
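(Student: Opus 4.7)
The plan is to combine the two axioms of a central charge: use equivariant constancy to transport $Z(x,\zeta)$ from $x$ to $y$, and then use additivity (the Lie algebra character property) at the single point $y$ to add the two contributions.

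First I would verify that the statement even makes sense at $y$, i.e.\ that $\zeta$ lies in $G_y$. This follows from the commutativity assumption: for any $s$,
\[
\zeta(s).y = \zeta(s).\lim_{t\to 0}\lambda(t).x = \lim_{t\to 0}\lambda(t).(\zeta(s).x) = \lim_{t\to 0}\lambda(t).x = y,
\]
where in the second equality I used $[\zeta,\lambda]=0$ and in the third that $\zeta \in G_x$. Hence $\zeta, \lambda$ are commuting one-parameter subgroups of $G_y$, so $\zeta\circ\lambda$ lies in a common torus containing both and the expression $Z(y,\zeta\circ\lambda)$ is defined via the additive extension of $Z(y,\cdot)$ furnished by Definition \ref{definition: central charge stack equivariant main body}(i).

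Next I would produce a connected subscheme $B\subset X$ that contains both $x$ and $y$ and on which $\zeta$ is fibrewise a one-parameter subgroup of the stabiliser. The natural candidate is the closure of the $\lambda$-orbit of $x$,
\[
B \;=\; \overline{\{\lambda(t).x : t\in\C^*\}},
\]
which is a connected (indeed irreducible) subscheme containing $x = \lambda(1).x$ and $y = \lim_{t\to 0}\lambda(t).x$. Commutativity of $\zeta$ and $\lambda$ combined with $\zeta\in G_x$ gives $\zeta(s).(\lambda(t).x) = \lambda(t).x$ for all $s,t$; hence $\zeta$ fixes every point of the $\lambda$-orbit, and by continuity (closedness of the fixed locus of $\zeta$) it fixes every point of $B$. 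So equivariant constancy applies and yields
\[
Z(x,\zeta) \;=\; Z(y,\zeta).
\]

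Finally, additivity of $Z(y,\cdot)$ as a Lie algebra character, applied to the commuting pair $\zeta,\lambda\in\mfk_y^{\C}$, gives
\[
Z(y,\zeta) + Z(y,\lambda) \;=\; Z(y,\zeta\circ\lambda),
\]
and combining with the previous identity produces the desired equality. I expect the only subtle step to be the verification that $\zeta$ actually stabilises every point of $B$ scheme-theoretically (as opposed to just on the $\C^*$-orbit); this is where one uses that the fixed locus of the $\zeta$-action is closed in $X$, so stabilisation on the dense orbit forces stabilisation on its closure.
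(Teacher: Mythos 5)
Your proof is correct and follows essentially the same route as the paper's: equivariant constancy (applied along the closure of the $\lambda$-orbit) gives $Z(x,\zeta)=Z(y,\zeta)$, and additivity at $y$ gives $Z(y,\zeta\circ\lambda)=Z(y,\zeta)+Z(y,\lambda)$. The paper states these two steps without elaboration; your verification that $\zeta$ stabilises every point of $B=\overline{\lambda(\C^*).x}$ (via closedness of the fixed locus) simply fills in the detail the paper leaves implicit.
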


\begin{proof} Since $$Z(y, \zeta \circ \lambda) = Z(y,\zeta) + Z(y,\lambda),$$ the claim follows from the claim that $$Z(x,\zeta) = Z(y,\zeta),$$ which in turn follows from equivariant constancy.\end{proof}

This notion of a central charge allows us define stability, for which we must in addition fix a \emph{phase} $\phi \in (-\pi,\pi)$.

\begin{definition} We say that $x \in X $ is:
\begin{enumerate}[(i)]
\item \emph{$Z$-semistable} if for all $x \rightsquigarrow (y, \lambda)$ we have $$\Ima(e^{-i\phi}Z(y,\lambda)) \geq 0;$$
\item \emph{$Z$-polystable}  if for all $x \rightsquigarrow (y, \lambda)$ we have $\Ima(e^{-i\phi}Z(y,\lambda)) \geq 0$ with equality if and only if $G.x=G.y;$
\item \emph{$Z$-stable} if $x$ is polystable and in addition $G_x$ is finite;
\item \emph{$Z$-unstable} otherwise.
\end{enumerate}
\end{definition}

Thus stability is measured relative to the chosen phase $\phi$. $Z$-semistability and polystability can  be defined similarly for points $(x,\zeta) \in \scS$, and in this way $Z$-semistability of $(x,\zeta)$ asks for ``equivariant $Z$-semistability'' of $x$, namely $Z$-semistability with respect to one-parameter subgroups commuting with $\zeta$. In standard geometric invariant theory, equivariant semistability is equivalent to semistability, and so we choose not to emphasise this slightly more general situation.

\begin{remark}

Under an additional hypothesis on $\phi$ and $Z(y,\lambda)$, we can rephrase the numerical inequality governing stability in a way more reminiscent of Bridgeland stability \cite{bridgeland}. Let $\arg: \C \setminus \R_{\leq 0} \to (-\pi,\pi)$ denote the principal branch of the argument function, and denote by $$\Hbb = \{ z \in \C \ | \ \Ima(z) \geq 0 \textrm{ and } z \notin \R_{<0}\}.$$ Provided $Z(y,\lambda) \in \Hbb$ we define the \emph{phase} of $Z(x,\zeta)$ to be $$\phi(y,\lambda)= \arg((y,\lambda)) \in (-\pi,\pi).$$Suppose further that $e^{-i\phi}$ and  $Z(y,\lambda)$ both lie in $\Hbb$ (so that $\phi \in [0,\pi)$). Then the condition $$\Ima(e^{-i\phi}Z(y,\lambda))\geq 0$$ holds if and only if  the phase inequality $$\phi\geq \phi(y,\lambda)$$ holds, and similarly if one demands strict inequalities. In fact what is important is to fix a given half-plane in $\C$; the specific choice $\Hbb$ is not essential, as stability is independent of simultaneously rotating the phase $\phi$ and the central charge $Z$ by the same angle.
\end{remark}

\begin{remark} To obtain a working theory with standard properties---such as Zariski openness of the stable locus---one certainly needs further hypotheses on the central charge. What we have given here seems to be the minimum required to give a definition analogous to the usual definition of a central charge on an abelian category, as we discuss in Section \ref{sec:coherent}, and  to obtain a link with moment maps and differential geometry, as we discuss in Section \ref{sec:momentmaps}.
\end{remark}

\subsection{Stability conditions on stacks}\label{stacks} In Section \ref{axiomatic} we considered a scheme $X$ with the action of a reductive group $G$. Viewing this as producing a global quotient stack $[X/G]$, we next make analogous definitions for arbitrary stacks.  What will be important is that---just as with $[X/G]$---we can speak of a point along with a one-parameter subgroup of its stabiliser. Our discussion will be rather formal, but to match with the hypotheses of the appendix---where a more thorough stack-theoretic treatment is provided---we assume that $\X$ is a quasi-separated algebraic stack, which is locally of finite type over $\C$ and has affine stabilisers.

Thus let $\X$ be such a stack. For $y\in \X$  we denote by $\Aut(y)$ the automorphism group (or stabiliser) of $y$. We then define  $$\scS = \{ (y,\lambda) \ | \ y \in \X \textrm{ and } \lambda \textrm{ is a one-parameter subgroup of } \Aut(y)\}/\sim,$$ where we say $(x,\lambda) \sim (y, \gamma)$ if there is an isomorphism $g: x \cong y$ such that $g\cdot \lambda = \gamma$, where $g \cdot \lambda$ denotes the one-parameter subgroup of the stabiliser $G_y$ of $y$ under the isomorphism $g: x \cong y$.

We endow $\scS$ with the structure of a directed graph as follows. We begin with the simplest case, namely when we consider $(x,\Id)$, with $\Id$ denoting the trivial (identity) one-parameter subgroup. Here we say that $$(x, \Id ) \rightsquigarrow (y,\lambda)$$ if there is a morphism $\Psi: [\C/\C^*] \to \X$ such that $\Psi(1) = x$ and $$\Psi(0, \C^*) = (y, \lambda);$$ that is, we consider $\C^*\hookrightarrow \Aut(0)$ for $0 \in \C$ and ask that this one-parameter subgroup is mapped to $\lambda \hookrightarrow \Aut(y)$. 

 In general, to define what it means for there to exist a degeneration $$(x, \zeta) \rightsquigarrow (y,\lambda),$$ we consider  the quotient stack $[\C^2/(\C^*)^2]$ given the natural action. We choose coordinates $(s,t)$ on $\C^2$ and $\C^*$-actions $\hat \lambda, \hat\zeta$, so that for $s =1$ the point $(0,1)$ has stabiliser $\xi$ and similarly for $t =0$ the point $(1,0)$ has stabiliser $\hat\zeta$. The notation is suggestive: we will ask that $\hat \lambda, \hat\zeta$ map to $\lambda,  \zeta$ respectively.  We then say that $(x, \zeta ) \rightsquigarrow (y,\lambda)$ if there is a morphism $\Psi: [\C^2/(\C^*)^2] \to \X$ satisfying the following two conditions. Firstly, on points we ask that $\Psi(t,1) = x$ for all $t \in \C$ and $\Psi(t, 0) = y$. Secondly, on stabilisers we ask that the $\C^*$-stabiliser $\hat \zeta$ of $(0,1)$ is mapped to $\zeta \subset \Aut(\Psi(0,1)) = \Aut(x)$, and we in addition ask that the stabiliser $\hat \lambda$ of $(t,0)$ is mapped to $\lambda  \subset \Aut(\Psi(t,0)) = \Aut(y)$. The consideration of $\C^2$ is not essential, and one can instead consider the quotient stack $[\C/(\C^*)^2]$, where one copy of $\C^*$ lies in the stabiliser of every point of $\C$.

\begin{definition} We call a morphism $\Psi: [\C/\C^*] \to \X$ with $\Psi(1) = x$ a \emph{test configuration} for $x$.

\end{definition} 

\begin{remark} Morphisms $[\C/\C^*] \to \X$ are called ``filtrations'' of $x$ by Halpern-Leistner \cite{DHL}, motivated by  the case when $\X$ is the stack of coherent sheaves on a scheme, but since this word has various other meanings in the theory of K-stability, we instead use Donaldson's terminology.
\end{remark}
 
\begin{remark} 
With this terminology, maps $[\C^2/(\C^*)^2] \to \X$ (as considered in our definition of a degeneration $(x, \zeta) \rightsquigarrow (y,\lambda)$) are essentially ``test configurations of test configurations''; we give a geometric example of this condition for polarised schemes in Remark \ref{test-config-of-config}.
\end{remark}

We can now define a central charge analogously to before. Two pieces of notation will be useful. Firstly we let $ \mfk_{x} \subset \Lie \Aut(x)$ be the Lie subalgebra associated to a maximal compact subgroup $K_x \subset G_x$.

\begin{definition}\label{definition: central charge stack main body} A \emph{central charge} is a function $Z: \scS \to \C$ such that:
\begin{enumerate}[(i)]
\item (Additivity) For each $x \in \X$, $Z$ extends to a Lie algebra character $$Z(x,\cdot): \mfk_{x} \to \C.$$
\item (Equivariant constancy) Suppose $\Psi: [B/\C^*] \to  \X$ is a morphism where $\C^*$ fixes each point $b \in B$ of a connected finite-type scheme $B$, and denote by $\lambda_{\Psi(b)}$ the associated one-parameter subgroup of $\Aut(\Psi(b))$. Then $Z(\Psi(b),\lambda_{\Psi(b)})$ is independent of $b \in B$.\end{enumerate}
\end{definition}

\begin{remark}A central charge as defined here is a variant of  Halpern-Leistner's notion of a ``numerical invariant'' \cite{DHL}, and in particular the equivariant constancy used here is motivated by a property he demands.\end{remark}

A central charge induces a notion of stability just as before: fixing a phase $\phi \in (-\pi, \pi)$, we say that $x$ is \emph{$Z$-semistable} if for all $(x,\Id) \rightsquigarrow (y,\lambda)$ the inequality $$\Ima(e^{-i\phi}{Z(y,\lambda)}) \geq 0$$ holds, and \emph{$Z$-stability}, $Z$\emph{-polystability} and $Z$-\emph{instability} are defined analogously.

\subsection{Examples}\label{examples}
We give two examples of central charges for particular stacks: the stack of \emph{coherent sheaves} and the stack of \emph{polarised schemes}.

\subsubsection{Coherent sheaves}\label{sec:coherent}

Let $\scC$ denote the stack of coherent sheaves over a projective scheme $X$, with $\scS$ parametrising sheaves along $E$ along with a one-parameter subgroup of $\Aut(E)$.  There is a classical notion of a central charge on $\scC$: this associates to each coherent sheaf $E$ on $X$ a complex number $Z(E)$ which is deformation invariant (i.e. constant in flat families of sheaves; this is a consequence of central charges being assumed to factor through the numerical Grothendieck group of coherent sheaves and constancy of numerical invariants in flat families) and additive in short exact sequences. Here we explain how this canonically induces a central charge in the sense of Section \ref{stacks}, so that our notion can be seen as a generalisation of the classical notion.

In the stack of coherent sheaves, test configurations $[\C/\C^*] \to \scC$  for $E$ correspond to filtrations  of $E$  labelled by integers (this is standard; see e.g. \cite[Example 0.0.2]{DHL}). For example, any subsheaf $S\subset E$ induces the specialisation $$(E,\Id) \rightsquigarrow  (S\oplus E/S,(\Id, \exp(t)).$$

Suppose we are given for each coherent sheaf $E$ a complex number $Z(E)$ which is constant in flat families and additive in short exact sequences (such as from a central charge in the classical sense). For an element $$(E_1\oplus\hdots\oplus E_k, (\Id,\hdots, \exp(t),\hdots,\Id)) \in \scS,$$ with $\exp(t)$ in the $j\textsuperscript{th}$ spot, first set $$Z(E_1\oplus\hdots\oplus E_k, (\Id,\hdots, \exp(t),\hdots,\Id))  = Z(E_j).$$ Then  setting $$Z(E_1\oplus\hdots\oplus E_k, (\exp(a_1t),\hdots,\exp(a_kt)))  = \sum_{j=1}^k a_jZ(E_j)$$ induces a central charge: additivity follows by definition, while equivariant constancy is a consequence of $Z(E)$ being constant in flat families of sheaves.

The classical example of a central charge on  the stack of coherent sheaves over a polarised scheme $(X,L)$ is given by $$Z(E) = \rk E - i\deg E,$$ where $\rk E$ denotes the rank and $\deg E = c_1(E) \cdot L^{n-1}$ denotes the degree. With this choice $Z$-semistability recovers \emph{slope semistability}, while if one restricts to test configurations induced by \emph{saturated} subsheaves of $E$ then $Z$-polystability recovers slope polystability (for a survey explaining slope stability and its relation to Bridgeland stability see \cite{bayer-notes}).

\subsubsection{Polarised schemes} Denote by $\X$ the stack of $\Q$-polarised schemes with fixed Hilbert polynomial (that is, we consider schemes together with an ample $\Q$-line bundle). The main differences between the polarised scheme theory and the coherent sheaf theory is that test configurations no longer correspond directly to \emph{subobjects}, so it is more natural to consider polarised schemes with fixed Hilbert polynomial rather than considering all polarised schemes at the same time. 

Unravelling the definition, a test configuration for $(X,L)$ in $\X$ corresponds to a flat family $(\Y,\L_{\Y}) \to \C$ of polarised schemes along with a $\C^*$-action covering the natural one on $\C$, such that the fibres satisfy $(\Y_t,\L_{\Y,t}) \cong (X,L)$ for all $t \neq 0$. This agrees with the usual definition of a test configuration due to Donaldson \cite{donaldson-toric}, generalising Tian's prior work \cite{tian}. Given a test configuration with associated $\C^*$-action $\lambda$, we write $(X,L) \rightsquigarrow (\Y_0,\L_{\Y_0},\lambda)$.

The set $\scS$ consists of triples $(X,L,\zeta)$ where $(X,L)$ is a polarised scheme and $\zeta$ is a one-parameter subgroup of $\Aut(X,L)$. Denote by $\mfk_{(X,L)}$ the  Lie algebra of a maximal compact subgroup $K \subset \Aut(X,L)$. The notion of a central charge for polarised schemes is the following. 

\begin{definition}\label{def:equivariant-constancy-varieties} A central charge is a function $Z: \scS \to \C$ which satisfies:
\begin{enumerate}[(i)]
\item (Additivity) For a fixed polarised scheme, $Z$ induces a Lie algebra character $$Z((X,L),\cdot): \mfk_{(X,L)} \to \C.$$
\item (Equivariant constancy) Suppose $\pi: (\Y,\L_{\Y}) \to B$ is a flat family of polarised schemes, and suppose there is a $\C^*$-action $\lambda$ on $(\Y,\L_{\Y})$ such that $\pi \circ \lambda(t) = \pi$ for all $t$. Then $Z(\Y_b,\L_{\Y_b},\lambda)$ is independent of $b\in B$, where $(\Y_b,\L_{\Y_b})$ denotes the fibre of $\pi$ over $b$.\end{enumerate}
\end{definition}

Given a central charge, one can then ask for a polarised scheme $(X,L)$ to be \emph{$Z$-semistable, $Z$-stable, $Z$-polystable} or \emph{$Z$-unstable} in the natural way. For example, fixing a phase $\phi \in (-\pi, \pi)$ for $(X,L)$ to be $Z$-semistable asks that for all $(X,L) \rightsquigarrow (Y,L_Y,\lambda)$ we have $$\Ima\left(e^{-i\phi}Z(Y,L_Y, \lambda)\right) \geq 0.$$ For a test configuration $(\Y,\L)$, if we define $Z(\Y,\L_Y)$ to be $Z (\Y_0,\L_{\Y_0},\lambda)$, we may think of a central charge as associating a complex number to each test configuration, in an additive and equivariantly constant manner. With this notation $Z$-semistability then asks that for all test configurations $(\Y,\L)$ for $(X,L)$ we have $$\Ima\left(e^{-i\phi}{Z(\Y,\L)}\right) \geq 0.$$

\begin{remark}\label{test-config-of-config}

The analogue for polarised schemes of the condition that $(x,\zeta) \rightsquigarrow (y,\lambda)$ is the following. Suppose $(X,L)$ is a polarised scheme with $\zeta$ a $\C^*$-action on $(X,L)$. We can then ask for a test configuration $(\Y,\L_{\Y})$ to be $\zeta$-\emph{equivariant}, in the sense that there is a $\C^*$-action on $\Y$ acting fibrewise (so preserving the map to $\C$), extending the action of $\zeta$ on the general fibre $(X,L)$ and commuting with the $\xi$-action on $\Y$ coming from the definition of a test configuration. Given such a $\zeta$-equivariant test configuration, we obtain a family $
\Y \times \C \to \C^2$ with a $(\C^*)^2$-action induced from $\xi$ on the first factor and (the extension of) $\zeta$ on the second. There are two $\C^*$-actions on $(\Y_0,\L_{\Y_0})$ induced by $\xi$ and $\zeta$, and the condition that $(X,L,\zeta) \rightsquigarrow (\Y_0,\L_{\Y_0}, \lambda)$ asks that there is a $\zeta$-equivariant test configuration as described such that in addition $\xi \circ \zeta = \zeta \circ \xi =\lambda$ as subgroups of $\Aut (\Y_0,\L_{\Y_0}).$ Note that this implies $\lambda$ and $\zeta$ also commute in $\Aut (\Y_0,\L_{\Y_0}).$ As before, it is not essential to consider families over $\C^2$, as one can instead consider families over $\C$ at the expense of working with the ineffective quotient $[\C/(\C^*)^2]$.

\end{remark}

\begin{example}[K-stability] Suppose $(\Y,\L)$ is a test configuration for an $n$-dimensional scheme $(Y,L)$, inducing a $\C^*$-action $\lambda_0$ on $H^0(\Y_0,\L_0^k)$ for all $k$. The dimension of $H^0(\Y_0,\L_0^k)$ and the total weight of the action on $H^0(\Y_0,\L_0^k)$ are polynomials for $k \gg 0$ which we may write \begin{align*} h(k) &= a_0k^n + a_1k^{n-1} + O(k^{n-2}), \\ w(k) &= b_0k^{n+1}+b_1k^n+O(k^{n-1})\end{align*} respectively. Setting $$Z((\Y_0,\L_0),\lambda_0) = -ib_0 + b_1, \qquad Z((Y,L),\Id) = ia_0-a_1,$$ produces a central charge, and the notion of $Z$-semistability recovers Donaldson's notion of K-semistability \cite{donaldson-toric} (extending Tian's analytic definition in the Fano case \cite{tian}), and similarly for $Z$-stability and $Z$-polystability. 

After Donaldson's original work,  a subtlety in the definition of K-stability (rather than K-semistability) was realised: for normal varieties one must exclude certain ``almost trivial'' test configurations (test configurations whose total space normalises to the trivial test configuration) to have a sensible theory \cite{li-xu, stoppa, uniform, BHJ}; it is not clear what role these degenerate test configurations play in the theory for more general central charges.  Excluding almost trivial test configurations is analogous to the restriction to considering saturated subsheaves in the definition of slope stability of torsion-free coherent sheaves; by contrast torsion sheaves play a central role in Bridgeland stability.

\end{example}

\begin{example} The notion of a central charge is intended to axiomatise the notion of a central charge introduced in \cite{stabilityconditions}. There a specific smooth polarised variety $(X,L)$ was fixed, and  a central charge was defined explicitly through a choice of topological information on $(X,L)$. This choice canonically induces a phase $\phi = \arg Z(X,L)$, which is then independent of $(X,L)$ itself provided it varies in a flat manner. For each test configuration $(\Y,\L)$ with smooth total space, a number $Z(\Y,\L) \in \C$ was then defined via intersection theory on a natural compactification of the total space $\Y$. Thus the definition relies on $(\Y_0,\L_0)$ being the central fibre of a test configuration with reasonable total space. It would be  interesting to define these quantities intrinsically on $(\Y_0,\L_0)$, more in line with the perspective of this note. \end{example}

\section{$Z$-critical points and complex moment maps}\label{sec:momentmaps}

We next describe the analytic counterpart to $Z$-stability, through what we call \emph{complex} moment maps. In the traditional theory of moment maps,  one can either consider maps to the  Lie algebra  or its dual; the former requires a choice of  an inner product. This is mostly an aesthetic choice, and we choose to  fix an inner product so that the  links with the motivating \emph{infinite}-dimensional problems are most transparent, as this is one of the main goals of our work. We refer to Kirwan \cite{kirwan}, Georgoulas--Robbin--Salamon \cite{GRS} and Hashimoto \cite{hashimoto} for comprehensive accounts of the relationship between moment maps and geometric invariant theory.

To define the inner product, we proceed as follows. Firstly we fix a faithful representation of $G$ on a complex vector space $V$, giving an embedding $$\mfg \subset \End V.$$ Thus we may \emph{multiply} elements of $\mfg$. The most important example to keep in mind is when $X \subset \pr^n = \pr(V)$ is a subvariety of projective space and $G$ acts faithfully and linearly on projective space, meaning there is a natural $G$-action on $V$. We assume that $G$ is reductive, and is hence the complexification of a maximal compact subgroup $K \subset G$. We also fix a $K$-invariant Hermitian inner product on $V$, which induces one on $\End V$ and hence induces an isomorphism $$\mfg \cong \mfg^*.$$ In this way, for $u \in \mfg$ and $\alpha \in \mfg^*$ we have $$\langle u, \alpha \rangle = \tr (u^*\alpha^{\lor}),$$ where $\langle \cdot , \cdot \rangle$ is the natural pairing, $\alpha^{\lor} \in \mfg$ is the dual element of $\alpha$ and $u^*$ is the congugate transpose. 

We now return to a smooth projective variety $X$, which we endow with a closed $K$-invariant \emph{complex} $(1,1)$-form $\omega_Z$. For the moment we do \emph{not} assume any positivity hypotheses on $\omega_Z$. 

Let $G$ be a reductive linear algebraic group acting holomorphically on $X$ and fix a representation of $G$ and a Hermitian inner product as above. We write $K$ for the maximal compact subgroup of $G$, and let $\mfk$ denote the Lie algebra of $K$. We will---slightly abusively---identify an element $v \in \mfg$ with its induced vector field on $X$.

\begin{definition} We say that a smooth map $$\tilde Z: X \to \mfg$$ is a \emph{complex moment map} if for all $u \in \mfk$ we have $$d \tr \left(u^* \tilde Z\right) = -\iota_{u} \omega_Z$$ and $\tilde Z$ is $K$-equivariant with respect to the adjoint action on $\mfk$. 
 \end{definition}

 \begin{remark} 
 
 Similar structures to complex moment maps arise in B\'erczi--Kirwan's recent work providing a  moment map interpretation of non-reductive GIT \cite{berczi-kirwan}, and it would be interesting to understand the relationship between their work and what we consider here.
 
 \end{remark}
 
We now assume that $\tilde Z$ is a complex moment map. To link with the definition of a central charge, it is useful to upgrade $\tilde Z$ to a smooth function $$\tilde Z: X \times \mfk \to \mfg$$ by defining $$\tilde Z(x, v) =v^*\tilde Z(x),$$ where the second term is interpreted as multiplication of elements of $\mfg \subset \End V$. In the following we view $\scS$ as consisting of pairs $(x,u)$ such that $x \in X$ and $u \in \mfk_{x}$. We fix a central charge $Z$ on $\S$ and a phase $\phi \in (-\pi,\pi)$.

\begin{definition}\label{def:compatibility}
We say that $\tilde Z$ is \emph{compatible with a central charge} $Z: \scS \to \C$ if for all $(x,v) \in \scS$ we have $$\tr(\tilde Z(x,v)) = Z(x,v)$$ and $$\arg \tr(\tilde Z(x)) = \phi.$$
\end{definition}

\begin{remark} Compatibility is a key point of the definitions: if one thinks of $Z$ as analogous to a choice of topological classes---as will be the case in the examples given below---compatibility is analogous to asking that $\tilde Z$ produces Chern-Weil and equivariant Chern-Weil representatives of these topological classes. Being able to phrase the compatibility condition is the main advantage of choosing a representation of $G$ on the vector space $V$.
\end{remark}

From here we fix a central charge $Z$ compatible with the complex moment map $\tilde Z$. We thus turn to linking complex moment maps with $Z$-stability, and in particular we will exclusively be interested in understanding $\tilde Z$ on points, rather than general pairs $(x,u)$. We fix a phase $\phi \in (-\pi,\pi)$.

\begin{definition} We say that a point $x$ is \emph{$Z$-critical} if $$\Ima(e^{-i\phi} \tilde Z(x)) = 0,$$ where $\Ima$ refers to the skew-Hermitian part of an element of $\mfg$ with respect to the Hermitian inner product. \end{definition}

As we will explain, this is the key condition related to $Z$-polystability. A  basic observation shows compatibility is actually automatic at $Z$-critical points:

\begin{lemma}\label{compatibility-lemma}

Suppose $x$ is a $Z$-critical point. Then $$\arg \tr(\tilde Z(x)) = \phi.$$

\end{lemma}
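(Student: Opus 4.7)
The plan is to translate the $Z$-critical equation into a Hermiticity statement and then apply the trace. By definition of a $Z$-critical point, $\Ima(e^{-i\phi}\tilde Z(x)) = 0$, where $\Ima$ denotes the skew-Hermitian part of an element of $\mfg \subset \End V$ with respect to the fixed $K$-invariant Hermitian inner product on $V$; equivalently, $e^{-i\phi}\tilde Z(x) \in \End V$ is Hermitian.

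The key step is then essentially immediate: any Hermitian endomorphism $A$ satisfies $\tr(A^*) = \overline{\tr A}$, so $A=A^*$ forces $\tr A \in \R$. Applied to $A = e^{-i\phi}\tilde Z(x)$ and pulling the scalar outside the trace, this yields
\begin{equation*}
e^{-i\phi}\tr(\tilde Z(x)) \;=\; \tr\!\left(e^{-i\phi}\tilde Z(x)\right) \;\in\; \R,
\end{equation*}
so $\tr(\tilde Z(x))$ lies on the line through the origin of argument $\phi$, which already forces $\arg\tr(\tilde Z(x)) = \phi$ modulo $\pi$.

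The only remaining point---and the sole potential obstacle---is to rule out $\arg\tr(\tilde Z(x)) = \phi + \pi$. This is a sign ambiguity rather than any substantive inequality. I would resolve it by appealing to the other half of compatibility in Definition \ref{def:compatibility}, namely $\tr(\tilde Z(x,v)) = Z(x,v)$: under the standing convention that $\tilde Z$ is chosen as the correctly oriented Chern--Weil representative of $Z$, the real number $e^{-i\phi}\tr(\tilde Z(x))$ is nonnegative rather than nonpositive, and the lemma follows.
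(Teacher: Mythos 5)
Your proposal is correct and is essentially the paper's own argument: the paper decomposes $\tilde Z(x)=M_h(x)+M_s(x)$ into parts that become Hermitian and skew-Hermitian after multiplication by $e^{-i\phi}$ and observes that $\tr\Ima(e^{-i\phi}\tilde Z(x))=\Ima(e^{-i\phi}\tr\tilde Z(x))$, which is exactly your observation that a Hermitian endomorphism has real trace. You are in fact slightly more careful than the paper, which silently elides both the resulting $\bmod\ \pi$ ambiguity in $\arg\tr\tilde Z(x)$ (which you resolve by the orientation convention in the compatibility condition) and the degenerate possibility $\tr\tilde Z(x)=0$.
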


\begin{proof}

Write $$\tilde Z(x) = M_h(x) + M_s(x)$$ where $e^{-i\phi}M_h(x)$ is Hermitian and $e^{-i\phi}M_s(x)$ is skew-Hermitian. Then we see that $$ \Ima( e^{-i\phi}  \tilde Z(x)) = e^{-i\phi} M_s(x),$$ so  \begin{align*}\tr \Ima( e^{-i\phi}  \tilde Z(x)) &= \tr e^{-i\phi} M_s(x), \\  &= e^{-i\phi} \tr M_s(x), \\ &= \Ima(  e^{-i\phi} \tr \tilde Z(x)),\end{align*} where in the final step we used the similar fact that $\tr e^{-i\phi} M_h(x)$ is real.  Thus if $\Ima(e^{-i\phi} \tilde Z(x)) = 0,$ we must have $\Ima(  e^{-i\phi} \tr \tilde Z(x)) = 0,$ which  implies $$\arg \tr(\tilde Z(x)) = \phi,$$ concluding the proof.
\end{proof}

While we  actually assume compatibility throughout, this result nevertheless makes clear that it is a natural condition. We next relate complex moment maps to usual moment maps for compact group actions.

\begin{definition} We call a map $\mu: X \to \mfk^*$ a \emph{formal moment map} with respect to a closed $(1,1)$-form $\eta$ if $\mu$ is K-equivariant and $$d\langle \mu, v\rangle = -\iota_v \eta.$$\end{definition}

 Thus if $\eta$ is positive---hence defining a symplectic form---$\mu$ is a  moment map in the usual sense. In the language of equivariant cohomology, the condition asks that the (complex) equivariant differential form $\eta + \mu$  is equivariantly closed.

\begin{proposition} Suppose $\tilde Z$ is a complex moment map with respect to $\omega_Z$. Then $$i\Ima(e^{-i\phi}\tilde Z(\cdot))^{\lor}: X \to \mfk^*$$ is a formal moment map with respect to the $(1,1)$-form $\Rea(e^{-i\phi}\omega_Z).$

\end{proposition}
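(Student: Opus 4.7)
The proposal is to derive the formal moment map identity by multiplying the defining relation of $\tilde Z$ by the scalar $e^{-i\phi}$ and extracting real parts, and then to identify the resulting expression with the pairing $\langle \mu,\cdot\rangle$ using the Hermitian/skew-Hermitian decomposition on $\mfg \subset \End V$.

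First, starting from the complex moment map equation $d\tr(u^*\tilde Z) = -\iota_u \omega_Z$ for $u \in \mfk$, multiplication by $e^{-i\phi}$ (which is a scalar, so passes through $d$ and $\tr$) gives $d\tr(u^* e^{-i\phi}\tilde Z) = -\iota_u(e^{-i\phi}\omega_Z)$. Taking real parts then yields
$$d\,\Rea\tr(u^* e^{-i\phi}\tilde Z) = -\iota_u \Rea(e^{-i\phi}\omega_Z),$$
whose right-hand side already matches the target.

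The key algebraic step is to check that, for $u \in \mfk$ and any $A \in \mfg$, $\Rea\tr(u^*A) = \tr(u^*\Ima(A))$. I would decompose $A = H + S$ into Hermitian and skew-Hermitian parts (so $S = \Ima(A)$) and use $u^* = -u$ together with cyclicity of trace: one checks that $\tr(uH)$ is purely imaginary while $\tr(uS)$ is real, so the real part of $\tr(u^*A) = -\tr(uA)$ picks out precisely $-\tr(uS) = \tr(u^*S)$. Applying this with $A = e^{-i\phi}\tilde Z$ converts the displayed equation into $d\tr(u^*\Ima(e^{-i\phi}\tilde Z)) = -\iota_u\Rea(e^{-i\phi}\omega_Z)$, which under the identification $\mfk^* \cong \mfg$ (and tracking the factor of $i$ coming from how $\mfk^*$ sits inside $\mfg^*$ relative to the Hermitian decomposition $\mfg = \mfk \oplus i\mfk$) is exactly the formal moment map condition $d\langle\mu,u\rangle = -\iota_u\Rea(e^{-i\phi}\omega_Z)$ for $\mu = i\Ima(e^{-i\phi}\tilde Z(\cdot))^{\lor}$. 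The $K$-equivariance of $\mu$ follows immediately from that of $\tilde Z$, together with the observation that for unitary $k \in K$ the adjoint action $\mathrm{Ad}_k$ commutes with the involution $(\cdot)^*$ and hence preserves the Hermitian/skew-Hermitian decomposition, so $\Ima \circ \mathrm{Ad}_k = \mathrm{Ad}_k \circ \Ima$.

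The main obstacle is not analytic but notational: one must pin down conventions identifying $\mfk^*$ with a subspace of $\mfg$ (via the Hermitian inner product on $\End V$) in a way consistent both with the pairing $\langle u,\alpha\rangle = \tr(u^*\alpha^{\lor})$ and with the factor of $i$ appearing in the stated formula for $\mu$. Once those conventions are fixed, the substantive content is just the one-line trace calculation above, which is essentially a reformulation of the fact that for $u \in \mfk$ the functional $A \mapsto \Rea\tr(u^*A)$ on $\mfg$ vanishes on Hermitian matrices.
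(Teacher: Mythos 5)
Your proof is correct and follows essentially the same route as the paper's: apply $e^{-i\phi}$ to the complex moment map identity, extract the appropriate part via a trace identity against $u$ (the paper contracts the form first and uses $\iota_u\Ima(e^{-i\phi}\omega_Z)=\Ima(e^{-i\phi}\iota_u\omega_Z)$, which is the same manipulation), and deduce equivariance from that of $\tilde Z$ and of the inner product. The one discrepancy is the convention for $\mfk\subset\End V$: the paper explicitly takes $u\in\mfk$ to be \emph{Hermitian}, so it extracts imaginary parts, $\Ima\left(e^{-i\phi}d\tr(u^*\tilde Z)\right)=d\tr\left(u^*\Ima(e^{-i\phi}\tilde Z)\right)$, and lets the factor of $i$ in $\mu=i\Ima(e^{-i\phi}\tilde Z(\cdot))^{\lor}$ compensate via $\Rea(e^{-i\phi}\omega_Z)=\Ima(ie^{-i\phi}\omega_Z)$, whereas you assume $u^*=-u$ and extract real parts; you correctly flag that this bookkeeping must be pinned down, and either convention yields the stated result.
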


Here the notation $i\Ima(e^{-i\phi}\tilde Z(\cdot))^{\lor}$ means the composition of $i\Ima(e^{-i\phi}\tilde Z(\cdot)) \to \mfk$ with the isomorphism $\mfk \cong \mfk^*$, while $\Ima(e^{-i\phi}\omega_Z)$ denotes the imaginary part of the complex $(1,1)$-form $e^{-i\phi}\omega_Z$. Note that $\Ima(e^{-i\phi}\tilde Z(\cdot))^{\lor}$ itself has image in $i\mfk$, meaning $i\Ima(e^{-i\phi}\tilde Z(\cdot))^{\lor}$ takes values in Hermitian matrices. This extra factor of $i$ is compensated for in the $(1,1)$-form as $$\Rea(e^{-i\phi}\omega_Z) = \Ima (e^{-i\phi}i\omega_Z) .$$

\begin{proof}

$K$-equivariance of $\tilde Z: X \to \mfg$ implies that $i\Ima(e^{-i\phi}\tilde Z(\cdot)): X \to \mfk$ is $K$-equivariant, while the $K$-equivariance of the Hermitian inner product on $V$ implies that the isomorphism $\mfk \cong \mfk^*$ is $K$-equivariant. 

To prove the moment map equation, it is equivalent to show that  $$d \langle u, \Ima(e^{-i\phi}\tilde Z(x))^{\lor}\rangle = -\iota_{u}  \Ima(e^{-i\phi}\omega_Z).$$ Since $u \in \mfk$ corresponds to a real vector field on $X$, we have $$ \iota_{u}  \Ima(e^{-i\phi}\omega_Z) = \Ima(e^{-i\phi}\iota_{u} \omega_Z),$$ which by the complex moment map identity gives $$ -\iota_{u}  \Ima(e^{-i\phi}\omega_Z) =  \Ima\left(e^{-i\phi}d \tr\left(u^* \tilde Z\right)\right).$$ Using a similar linear algebra argument as Lemma \ref{compatibility-lemma} along with the fact that $u$ viewed as an element of $\End V$ corresponds to a Hermitian matrix, we see  $$ \Ima\left(e^{-i\phi}d \tr\left(u^* \tilde Z\right)\right) =  d \tr \left(u^* \Ima(e^{-i\phi}\tilde Z(x))\right).$$Since the isomorphism $\mfk \cong \mfk^*$ arises from the Hermitian inner product on $V$, it follows that $$\tr \left(u^*\Ima(e^{-i\phi}\tilde Z(x)) \right) = \langle u, \Ima( e^{-i\phi}\tilde Z(x))^{\lor}\rangle$$ and hence $$d \langle u, \Ima(e^{-i\phi}\tilde Z(x))^{\lor}\rangle = -\iota_{u} \Ima(e^{-i\phi}\omega_Z),$$ proving the result. \end{proof}

Positivity is, of course, crucial to the theory of moment maps. While many aspects of the theory require \emph{global} positivity, others rely only on \emph{local} positivity; for example, to obtain a symplectic quotient, one only needs positivity in an open neighbourhood of the zero-set of the moment map.

\begin{definition} We say that a point $x\in X$ is a $Z$-\emph{subsolution} if the form $$\Rea(e^{-i\phi}\omega_Z)$$ is positive on $T_xX$, in the sense that $$\Rea(e^{-i\phi}\omega_Z)(u, J_xu)>0$$ for all $u\neq 0$ with $J_x: T_xX \to T_xX$ being the almost complex structure. We further say that $\tilde Z$ satisfies the \emph{global subsolution hypothesis} if every point $x\in X$ is a subsolution.
\end{definition}

The global subsolution hypothesis is strong: analogues fail in infinite-dimensions, as discussed in the Remark \ref{rmk:subsolutions}. As mentioned there, in the better-understood infinite-dimensional problems, what is expected to be true is that every solution of the equation (i.e. being an analogue of a $Z$-critical point) \emph{is} also a subsolution. This is often enough to obtain geometric consequences:

\begin{theorem}\label{thm:quotient} Suppose every $Z$-critical point is a $Z$-subsolution. Then the symplectic quotient $$X/_Z K := \Ima(e^{-i\phi}\tilde Z(\cdot ))^{-1}(0)/K$$ admits the structure of a K\"ahler space. 
\end{theorem}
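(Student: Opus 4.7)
The plan is to reduce the theorem to a standard application of Kähler reduction by exploiting the fact that the subsolution hypothesis makes the formal moment map from the preceding proposition into a genuine moment map in an open neighbourhood of its zero set.

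First I would set $\mu := i\Ima(e^{-i\phi}\tilde Z(\cdot))^{\lor}\colon X \to \mfk^*$ and $\eta := \Rea(e^{-i\phi}\omega_Z)$, which by the preceding proposition form a $K$-equivariant formal moment map/closed $(1,1)$-form pair. By definition, the set of $Z$-critical points is exactly $\mu^{-1}(0)$, which by $K$-equivariance of $\mu$ is a $K$-invariant subset of $X$. The hypothesis that every $Z$-critical point is a $Z$-subsolution then asserts precisely that $\eta$ is positive (in the $(1,1)$ sense) at every point of $\mu^{-1}(0)$, and hence defines a genuine Kähler form on the tangent spaces along $\mu^{-1}(0)$.

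Next, since positivity of a smooth Hermitian form is an open condition on $X$, and since both $\eta$ and $\mu$ are $K$-invariant, there exists a $K$-invariant open neighbourhood $U \subset X$ of $\mu^{-1}(0)$ on which $\eta$ is a genuine Kähler form. On $U$, the pair $(\eta, \mu|_U)$ is therefore an honest Kähler moment map in the usual sense for the $K$-action, and $\mu^{-1}(0) = (\mu|_U)^{-1}(0)$.

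At this point I would simply invoke the standard Kähler reduction theorem: for any Kähler manifold endowed with a Hamiltonian action of a compact Lie group $K$ with moment map $\mu$, the symplectic quotient $\mu^{-1}(0)/K$ carries a canonical Kähler space structure (smooth where the action is locally free, with mild singularities at points with positive-dimensional stabilisers, handled by Sjamaar's stratified Kähler reduction \cite{kirwan}). Applied to $(U, \eta, \mu|_U)$ this gives $X/_Z K = \mu^{-1}(0)/K = (\mu|_U)^{-1}(0)/K$ the structure of a Kähler space, as required.

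The main subtlety is not the positivity argument, which is a direct consequence of the subsolution hypothesis and openness, but rather invoking the correct form of singular Kähler reduction: one must be careful that the orbit structure of $K$ on $\mu^{-1}(0)$ may be non-free, so the quotient is in general only a stratified Kähler space rather than a Kähler manifold. One should also verify that the $K$-invariant neighbourhood $U$ can be chosen so that the inclusion $\mu^{-1}(0)/K \hookrightarrow U/K$ is a homeomorphism onto a closed subset, which follows from properness of $\mu$ restricted to a small enough tubular neighbourhood using that $\mu^{-1}(0)$ is $K$-invariant.
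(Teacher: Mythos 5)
Your argument is essentially the paper's own proof: the subsolution hypothesis gives positivity of $\Rea(e^{-i\phi}\omega_Z)$ on the zero set of the moment map, openness of positivity upgrades this to a $K$-invariant neighbourhood, and one then invokes classical (singular) K\"ahler reduction, which only requires the K\"ahler structure near the zero set. The only difference is attributional --- the paper appeals to Heinzner--Loose and Heinzner--Huckleberry--Loose \cite{HL,HHL} rather than to Sjamaar's stratified reduction --- which does not affect the substance of the argument.
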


\begin{proof} This is classical under the global subsolution hypothesis \cite{HL,HHL}, but the proofs only require a K\"ahler metric in a \emph{neighbourhood} of the zero set of the moment map. Thus since every $Z$-critical point is a subsolution, and the subsolution condition is \emph{open} in $x$ (as it is a positivity condition on an inner product on the tangent space), the form $\Rea(e^{-i\phi}\omega_Z)$ is indeed a K\"ahler metric in a neighbourhood of $\Ima(e^{-i\phi}\tilde Z(\cdot ))^{-1}(0).$
\end{proof}

More explicitly, in this generality the K\"ahler metric on the quotient is produced as follows. Let $\psi_Z \in C^{\infty}(X,\C)$ be a local potential for the complex $(1,1)$-form $\omega_Z$ in a neighbourhood of a $Z$-critical point $x$, in the sense that near $x$ $$\omega_Z = i\ddbar \psi_Z.$$ Restricting $\psi_Z$ to (a neighbourhood of $x$ intersected with) $\Ima(e^{-i\phi}\tilde Z(\cdot ))^{-1}(0)$, as $\psi_Z$ is a $K$-invariant function, it descends to a continuous function $\tilde \psi_Z$ on the quotient $\Ima(e^{-i\phi}\tilde Z(\cdot ))^{-1}(0)/K$. The function $$\Rea(e^{-i\phi} \tilde \psi_Z) \in C^0(X/_Z K, \R)$$ is then a weak K\"ahler potential for the induced form on the complex space $X/_Z K$ in the sense used by Heinzner--Huckleberry--Loose \cite{HHL}.

Our main result explains how to relate the existence of $Z$-critical points to complex moment maps. The proof reduces to a version of the classical Kempf--Ness theorem (due Kempf--Ness in the affine setting \cite{kempf-ness} and to Kirwan in the projective setting \cite{kirwan}).

\begin{theorem} Suppose $\tilde Z$ satisfies the global subsolution hypothesis. Then the following are equivalent:
\begin{enumerate}[(i)]
\item there is a point $y \in G.x$ such that $\Ima(e^{-i\phi}\tilde Z(y)) = 0$;
\item $x$ is $Z$-polystable.
\end{enumerate}

\end{theorem}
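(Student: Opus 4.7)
The plan is to reduce the claim to a version of the classical Kempf--Ness theorem applied in the Kähler setting afforded by the subsolution hypothesis, and to identify the resulting Hilbert--Mumford weights with central charge values via the compatibility condition.

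Under the global subsolution hypothesis the closed $K$-invariant $(1,1)$-form $\omega := \Rea(e^{-i\phi}\omega_Z)$ is pointwise positive, hence is a genuine Kähler form on the smooth projective variety $X$. By the preceding proposition, $\mu := i\Ima(e^{-i\phi}\tilde Z(\cdot))^{\lor}: X \to \mfk^*$ is then a bona fide $K$-moment map on $(X,\omega)$, placing us in the classical Kempf--Ness/Kirwan framework.

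The Kempf--Ness theorem (in its Kähler formulation, cf. \cite{GRS, hashimoto}) then asserts that $G\cdot x$ meets $\mu^{-1}(0)$ if and only if $x$ is Hilbert--Mumford polystable with respect to $\mu$: for every one-parameter subgroup $\lambda$ of $G$ with compact generator $v \in \mfk$ and specialisation $y = \lim_{t \to 0}\lambda(t)\cdot x$, the value $\langle \mu(y), v\rangle$ (which coincides, by convexity of the Kempf--Ness function on $G/K$ and the fact that $y$ is $\lambda$-fixed, with the asymptotic slope of that function along $\lambda$) is non-negative, with equality if and only if $y \in G\cdot x$. The compatibility condition and a linear-algebra computation analogous to the one in Lemma \ref{compatibility-lemma} (using that $v \in \mfk$ is skew-Hermitian together with $\tr(\tilde Z(y,v)) = Z(y,v)$) then give
\[
\langle \mu(y), v\rangle \;=\; \Ima\bigl(e^{-i\phi}\, Z(y,v)\bigr),
\]
so that the Kempf--Ness polystability condition coincides \emph{exactly} with the definition of $Z$-polystability of $x$, proving both implications simultaneously.

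The main technical obstacle is invoking the Kempf--Ness theorem in the appropriate level of generality. Classical projective Kempf--Ness presupposes that $\omega$ lies in the first Chern class of an ample $G$-linearised line bundle, whereas here $\omega_Z$, and hence $\omega$, is a priori only a smooth $K$-invariant Kähler form with no integrality assumption. One must therefore either appeal to the Kähler/analytic version of Kempf--Ness for Hamiltonian actions of reductive groups on Kähler manifolds, or analyse directly the Kempf--Ness/Mumford functional on $G/K$ whose convexity yields both implications and whose asymptotic slope along the complex one-parameter subgroup generated by $v$ recovers $\langle \mu(y), v\rangle$. The equivariant constancy axiom for $Z$ is the algebraic shadow of this convex-analytic input, and once it is in place the equivalence follows by routine bookkeeping.
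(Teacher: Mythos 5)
Your overall strategy is the same as the paper's: use the global subsolution hypothesis to get a genuine K\"ahler form $\Rea(e^{-i\phi}\omega_Z)$ and a genuine moment map, invoke the K\"ahler version of the Kempf--Ness theorem (correctly noting that integrality fails so the classical projective statement does not apply), and use compatibility plus the linear algebra of Lemma \ref{compatibility-lemma} to identify the moment-map pairing with $\Ima(e^{-i\phi}Z(y,\cdot))$. That identification is carried out in the paper essentially as you describe.

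However, there is a genuine gap in the way you invoke the K\"ahler Kempf--Ness theorem. You state it as asserting that $G\cdot x$ meets $\mu^{-1}(0)$ if and only if a Hilbert--Mumford-type inequality holds \emph{for every one-parameter subgroup of $G$}. That is not what the K\"ahler version gives: in the absence of an integral class, the correct numerical criterion quantifies over \emph{all} elements $u\in\mfk$ and their limits $y=\lim_{t\to\infty}\exp(-itu).x$, including ``irrational'' $u$ that do not generate algebraic one-parameter subgroups. Since $Z$-polystability is defined only via degenerations $x\rightsquigarrow(y,\lambda)$ along genuine one-parameter subgroups, your argument as written only shows that a zero of the moment map in the orbit is equivalent to the (a priori stronger) condition tested against all of $\mfk$; the equivalence with $Z$-polystability does not follow by ``routine bookkeeping.'' The paper closes this gap with a separate argument: if $x$ is $Z$-unstable one produces a rational destabiliser from an irrational one by approximation (density of $\mfk_{\Q}$ in $\mfk$); otherwise $x$ is $Z$-semistable, the semistable case of Kempf--Ness produces a $Z$-critical point $z\in\overline{G.x}$, and a holomorphic slice of the $G$-action at $z$ reduces to the linear action on $T_zX$, where one finds an honest one-parameter subgroup degenerating $x$ to $z$. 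Your remark that the equivariant constancy axiom is ``the algebraic shadow of this convex-analytic input'' does not substitute for this step. (There is also a sign discrepancy: with the paper's conventions the pairing computes to $-\Ima(e^{-i\phi}Z(y,u))$ and the Kempf--Ness inequality is $\leq 0$; your $\geq 0$ with the $t\to 0$ limit needs to be checked for consistency with the definition of $Z$-polystability.)
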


\begin{proof}

By the global subsolution hypothesis, the form $\Ima(e^{-i\phi}\omega_Z)$ is a K\"ahler metric on $X$ and $\Ima(e^{-i\phi}\tilde Z(\cdot ))$ is a moment map. Although $X$ is a smooth projective variety, the form $\Rea(e^{-i\phi}\omega_Z)$ may not lie in an integral class, meaning we cannot apply the classical Kempf--Ness theorem. Instead we apply the Kempf--Ness theorem for \emph{K\"ahler} manifolds (see for example the survey \cite[Section 12]{GRS}), which implies that the existence of a $Z$-critical point in the orbit of $x$ is equivalent to the condition that for all $x \rightsquigarrow (y,u)$ we have $$\langle i\Ima(e^{-i\phi}\tilde Z(y))^{\lor}, u \rangle \leq 0,$$  with equality if and only if $y=x$. Here our slightly extended notation means that $$y = \lim_{t\to \infty} \exp(-itu).x.$$ What this essentially means is that, in the K\"ahler setting one must also include ``irrational'' vector fields to obtain the existence of a zero of the moment map, rather than merely rational ones inducing one-parameter subgroups of $G$. 

The rest of the proof will compare this numerical condition to the one governing $Z$-polystability, and will then explain that in fact it \emph{is} enough to merely consider ``rational'' vector fields (equivalently one-parameter subgroups) in our situation.

The definition of the isomorphism $\mfg \cong \mfg^*$ gives \begin{align*}\langle i\Ima (e^{-i\phi}\tilde Z(y)^{\lor}), u \rangle  &= \tr ((i\Ima (e^{-i\phi}\tilde Z(y)))^*u ), \\ &= \tr (i\Ima(e^{-i\phi}\tilde Z(y) u), \\ &= -\Ima(e^{-i\phi}Z(y, u)). \end{align*} In slightly more detail, the final step follows from the fact that for a (complex) matrix $A$ and a Hermitian matrix $B$ we have $$\tr (i(\Ima A) B) =- \Ima \tr (AB),$$ and also from the compatibility condition $\tr \left(u^*\tilde Z (y)\right) = \tr \left(\tilde Z (y) u \right) =Z(y,u)$ again using that $u$ is Hermitian.

To conclude we must show that it is enough to check stability with respect to \emph{rational} elements of $\mfk$, generating one-parameter subgroups. If $x$ is $Z$-unstable, it is standard to produce rational destabilising elements of $\mfk$ given the existence of an irrational one by an approximation argument, thus we may assume that $x$ is $Z$-semistable; here we note that $Z$-semistability with respect to rational vector fields implies $Z$-semistability also with respect to irrational ones. Then from the ``semistable'' case of the Kempf-Ness theorem there is a point $z \in \overline{G.x}$ which is itself $Z$-critical. We can then take a slice of the $G$-action in a neighbourhood of the $Z$-critical point $z$ (essentially by construction of the quotient in the complex setting \cite{HL}), so that the action is modelled on the linear action on $T_z X$, where it is clear that one can find a one-parameter subgroup taking $x$ to $z$. \end{proof}

\begin{remark}
Rather than the global subsolution hypothesis, the proof only requires the weaker condition that $\Rea(e^{-i\phi}\omega_Z)$ be positive in a neighbourhood of $\overline{G.x}$.\end{remark}

In the classical projective case a consequence of this sort of result is a homeomorphism between the symplectic and algebraic quotients (this is due to Kirwan \cite{kirwan}). As we have appealed to a K\"ahler version of the Kempf--Ness theorem, there is no purely algebraic definition of the quotient. So while---under the global subsolution hypothesis---one still obtains a quotient $X/_K K$ which is a complex space endowed with a K\"ahler metric by Theorem \ref{thm:quotient}, there is no direct algebraic construction to compare it with.

\subsection{Examples in infinite dimensions}
We next briefly explain the link between the categorical notions of stability for coherent sheaves and polarised schemes and moment maps.

\subsubsection{Vector bundles} Associated to a class of central charges on $\Coh(X)$---which in particular take the form of Section \ref{sec:coherent}---is a partial differential equation on Hermitian metrics on holomorphic vector bundles on $X$, solutions of which are called \emph{$Z$-critical connections} \cite{DMS}. Briefly, these central charges involve a choice of K\"ahler class on $X$, a choice of (products of) Chern classes of the sheaf and a choice of topological classes on $X$ (as motivated by Bayer's polynomial stability conditions \cite{bayer}). To a K\"ahler metric on $X$, a Hermitian metric on $E$ producing a Chern connection A and a closed differential form on $X$ representing the topological class is then associated an $\End E$-valued $(n,n)$ form $\tilde Z(E,A)$ which satisfies $$\int_X \tilde Z(A) = Z(E),$$ which is analogous to the compatibility of the central charge as given in Definition \ref{def:compatibility}. The inner product corresponding to the trace used there is  the $L^2$-inner product defined with respect to the volume form associated to the K\"ahler metric.

The $Z$-critical equation then asks that $$\Ima(e^{-i\phi(E)} \tilde Z(h)) = 0,$$ where this denotes the skew-Hermitian part of the $\End E$ component as defined through the Hermitian metric $h$. The various sign conventions used in the present work are chosen to match with the $Z$-critical connection and deformed Hermitian Yang--Mills literature. An especially noteworthy example is given by the deformed Hermitian Yang-Mills equation \cite{LYZ, MMMS} (which appeared long before the more general notion of a $Z$-critical connection), which corresponds to the central charge $$Z(E) = \int_X e^{-i[\omega]}\cdot \ch(E);$$ other equations relevant to string theory and mirror symmetry involve including Chern classes of  $X$ itself \cite[Example 2.8]{DMS}.

\begin{remark}\label{rmk:subsolutions} On line bundles, an almost complete theory of deformed Hermitian Yang--Mills connections exists, especially due to Chen, Collins, Jacob and Yau \cite{chen, jacob-yau, collins-jacob-yau, collins-yau}, and their theory emphasises many of the structures one should expect to be relatively general. For example, the existence of a solution to the deformed Hermitian Yang-Mills equation \emph{implies} that the associated Hermitian metric is a subsolution \cite{collins-jacob-yau} (in the so-called ``supercritical phase range'').  An important aspect of the theory this same statement (``solution implies subsolution'') is true along a continuity method one can use to solve the equation under a stability hypothesis \cite{chen}, so one always has positivity along the path designed to solve the equation.

In fact, the \emph{converse} of this statement also holds on line bundles: the existence of deformed Hermitian Yang--Mills connections on a line bundle is equivalent to the existence of a subsolution (again in the appropriate phase range); this is due to Collins--Jacob--Yau \cite{collins-jacob-yau}. In higher rank, for appropriate classes of central charge it should still be the case that the existence of a solution implies the existence of a subsolution, but the converse cannot hold. For example, the Hermitian Yang-Mills condition is a special case of the $Z$-critical condition, and here the subsolution condition is automatic (as it asks that $\omega^{n-1}$ is a positive $(n-1,n-1)$-form where $\omega$ is the K\"ahler metric), but nevertheless obstructions to the existence of solutions appear from saturated subsheaves.
\end{remark}

\subsubsection{Polarised varieties} The theory for smooth polarised varieties is analogous, but with additional complications on the analytic side \cite{stabilityconditions,momentmaps}. Here the equation is for a K\"ahler metric $\omega \in c_1(L)$ on a smooth projective variety $X$, and one makes analogous choices---namely a choice of topological classes on $X$ and products of Chern characters of $X$. The equation is only explicitly available in the case of powers of the \emph{first} Chern class of $X$ (along with arbitrary auxiliary differential forms on $X$ and powers of the ample line bundle), where one associates to $\omega$ a complex valued function $$\tilde Z(\omega): X \to \C.$$ To tighten the parallel with the bundle theory, equivalently by multiplying by $\omega^n$ one can consider $\tilde Z(\omega)$ as a complex valued $(n,n)$-form. This complex $(n,n)$-form satisfies the ``compatibility'' condition $$\int_X \tilde Z(\omega) = Z(X,L),$$ and the $Z$-critical equation asks $$\Ima(e^{-i\phi(X,L)}\tilde Z(\omega)) = 0.$$

The actual construction of $\tilde Z(\omega)$, however, is more subtle than its bundle analogue. The reason is that its construction involves not only various Chern--Weil representatives, but also higher-order terms essential for a link with algebraic geometry. A good understanding of the $Z$-subsolution condition---along with various other foundational structures---remains to be achieved.

\subsection{Structures in complex moment map theory}\label{structures}

We now briefly explain the appearance of several standard structures in classical moment map theory in our  setup: the norm-squared of the moment map, the moment map flow, the log-norm functional and the log-norm functional as a K\"ahler potential. Many of these have appeared in the infinite-dimensional theories discussed above, and our new perspective gives some finite-dimensional motivation for their appearance. All of these structures are discussed at great length in the survey \cite{GRS}.

For any $x\in X$, the norm-squared of the moment map is simply the value $$\|\Ima(e^{-i\phi}\tilde Z(x))\|^2 =  \tr (\Ima(e^{-i\phi}\tilde Z(x)) \cdot \Ima(e^{-i\phi}\tilde Z(x))).$$ This is the functional whose Euler--Lagrange equation produces both $Z$-critical points and $Z$-\emph{extremal points}: points which satisfy $$ \Ima(e^{-i\phi}\tilde Z(x)) \in i\mfk_x.$$

To define the moment map flow, note that for $x \in X$ the value  $i\Ima(e^{-i\phi}\tilde Z(x) \in \mfk$ can be thought of as an element of $T_xX$ through the infinitesimal action. Thus for any $x_0$ we may define a flow by $x(0) = x_0$ and $$\frac{d}{dt}x(t) = -\Ima(e^{-i\phi}\tilde Z(x(t))).$$ This is the downward gradient flow of the norm-squared of the moment map $$x \to \|\Ima(e^{-i\phi}\tilde Z(x))\|^2,$$ and we call this flow the $Z$-\emph{flow}. The asymptotics of this flow are related to ``optimal destabilising one-parameter subgroups'', which are in turn analogous to Harder--Narasihman-type filtrations in the coherent sheaf setting. In the deformed Hermitian Yang--Mills setting this flow corresponds to the \emph{tangent Lagrangian phase flow} of Takahashi \cite{takahashi-flow}, we note that in that setting there is also the \emph{line bundle mean curvature flow}, introduced by Jacob--Yau \cite{jacob-yau}, which is instead motivated by the \emph{Lagrangian mean curvature flow} in the study of special Lagrangians.

The log-norm functional is a functional on a fixed orbit, which is defined through its variation. Fixing a reference point $x\in X$, any other point in the form $g.x$ for $g \in G$. We first define a one-form $d E_Z$ on $G$ (the notation will be justified by this one-form being exact) by setting $$\langle dE_Z, u\rangle_g = \tr  (u^* \tilde Z(g(t).x)).$$  This is then $K$-invariant and hence descends to a one-form on the symmetric space $G/K$. A standard calculation, identical to the usual one in moment map theory, shows that this one form is closed and is hence exact. In particular it is well-defined, independent of choice of path. Thus we obtain a functional $$E_Z: G/K \to \C,$$ and we define the \emph{$Z$-energy} to be $$\Rea(e^{-i\phi} E_Z): G/K \to \R.$$ This is the analogue of the log-norm functional; this is convex along geodesics in the symmetric space $G/K$ in the locus of $Z$-subsolutions, and it is strictly decreasing along the $Z$-flow (again in the locus of $Z$-subsolutions). In the deformed Hermitian Yang--Mills setting this corresponds to what Collins--Yau call the \emph{Calabi--Yau functional}  \cite[Definition 2.13]{collins-yau} and in the setting of $Z$-critical K\"ahler metrics corresponds to the $Z$\emph{-energy} \cite[Definition 3.7]{stabilityconditions}.

We lastly turn to the potential for the form $\omega_Z$. A $G$-orbit $G.x \subset X$ is affine, hence on this locus $\omega_Z = i\ddbar \psi_Z$ for some complex-valued function $\psi_Z$. We can consider $E_Z$ as a function $G.x \to \R$ by defining the $Z$-energy relative to the base-point $x$. Then on this locus a calculation shows that $$i\ddbar E_Z = \omega_Z,$$ so that we can view the $Z$-energy as a potential for the form $\omega_Z$. In particular on this locus we have $$i\ddbar (\Rea(e^{-i\phi} E_Z)) = \Rea(e^{-i\phi}\omega_Z).$$ Thus the $Z$-subsolution condition forces the complex Hessian $i\ddbar (\Rea(e^{-i\phi} E_Z))$ to be positive at the point $x$.

\section*{Appendix, by Andr\'es Ib\'a\~nez N\'u\~nez}

In this appendix we explain how the notion of a central charge on an algebraic stack $\cX$ can be formulated using the formalism of graded points on $\cX$, in the spirit of Halpern-Leistner's definition of numerical invariant \cite{DHL}. In the present section we will call these \emph{complex linear forms} on $\X$, to make the statements of our results (especially the equivalence with the Definition \ref{definition: central charge stack equivariant main body}) transparent.

We denote $\Gbb_m=\Spec \C[t,t^{-1}]$ the multiplicative group scheme over $\C$, whose $\C$-points are $\Gbb_m(\C)=\C^*$. A crucial role will be played by the classifying stack $B\Gbb_m$ of $\Gbb_m$. While $B\Gbb_m$ is determined by the fact that, for any algebraic stack $\cX$ over $\C$, the groupoid $\Hom(\cX,B\Gbb_m)$ of maps $\cX \to B\Gbb_m$ is equivalent to that of line bundles on $\cX$, here we will rather be interested in maps \emph{from} $B\Gbb_m$ into other algebraic stacks.

We fix an algebraic stack $\cX$, quasi-separated and locally of finite type over $\C$, with affine stabilisers. Examples of stacks satisfying these assumptions are moduli stacks of polarised projective schemes over $\C$ \cite[\href{https://stacks.math.columbia.edu/tag/0DPS}{Tag 0DPS}]{stacks-project}, \cite[Section 2.1]{starr-dejong}, moduli stacks of objects in suitable $\C$-abelian categories \cite[Section 7]{existencemoduli} and stacks of $G$-bundles on a proper scheme $X$ over $\C$ for $G$ a linear algebraic group over $\C$ \cite[\href{https://book.themoduli.space/tag/00BK}{Tag 00BK}]{modulibook}.

\begin{definition}\label{definition: stack of graded points}
The \emph{stack of graded points} $\Grad(\cX)$ of $\cX$ is the stack over $\C$ defined by setting, for any scheme $T$ over $\C$,
\[\Hom(T,\Grad(\cX))=\Hom(B\Gbb_m\times T,\cX).\]
\end{definition}

In other words, $\Grad(\cX)$ is the mapping stack $\uMaps(B\Gbb_m,\cX)$. Thus a map $T\to \Grad(\cX)$ is the same data as a map $B\Gbb_m\times T\to \cX$. It is a nontrivial result \cite[Theorem 5.10]{alper} that $\Grad(\cX)$ is an algebraic stack locally of finite type over $\C$. We denote $\abs{\Grad(\cX)}$ its underlying topological space. 

We will use the notation $\Gamma^\Z(\Gbb_{m}^n)= \Hom(\Gbb_{m},\Gbb_{m}^n)$ for the group of cocharacters of $\Gbb_{m}^n$, which is isomorphic to $\Z^n$. More generally, for a linear algebraic group $G$ over $\C$ we denote $\Gamma^\Z(G)$ the set of cocharacters of $G$ and $\Gamma_\Z(G)$ the abelian group of characters of $G$.

 The main definition in this appendix is:

\begin{definition}\label{definition: central charge on a stack}
A \emph{complex linear form} $Z$ on $\cX$ is a locally constant map
\[Z\colon \abs{\Grad(\cX)}\to \C\]
such that, for any $n\in \Z_{>0}$ and any morphism $g\colon B\Gbb_{m}^n\to \cX$, the map $$Z_g\colon \Gamma^\Z(\Gbb_{m}^n)\to \C$$ induced by $g$ and $Z$ is $\Z$-linear, where the definition of $Z_g$ is as follows: if $\alpha\colon \Gbb_{m}\to \Gbb_{m}^n$ is a cocharacter, the composition 

\[\begin{tikzcd}
 B\Gbb_{m} \arrow[r, "B\alpha"]& B\Gbb_{m}^n \arrow[r, "g"]& \cX
\end{tikzcd}\]
defines a point $p$ of $\Grad(\cX)$, and we set $Z_g(\alpha)=Z(p)$.

We denote $\Charges(\cX)$ the set of complex linear forms on $\cX$, which is naturally a $\C$-vector space. The notation is intended to signify that we think of a complex linear form as a ``pre-stability condition'', where the eventual full structure of a stability condition should in addition require a positivity property.
The relationship with central charges and stability in the sense of Sections \ref{stacks} and \ref{axiomatic} is explained by Remarks \ref{equivalence of definitions of central charge} and \ref{connection}, respectively.

\end{definition}

\begin{remark}
Definition \ref{definition: central charge on a stack} makes sense whenever $\Grad(\cX)$ is an algebraic stack, which holds under very general assumptions on $\cX$ (see \cite[Theorem 5.10]{alper}). In this generality, the linearity condition in the definition should be imposed for all $g\colon B\Gbb_{m,k}^n\to \cX$, where $k$ is an arbitrary algebraically closed field.
\end{remark}

Complex linear forms on the classifying stack of a group have a transparent description.

\begin{lemma}\label{lemma: central charges for BG}
Let $G$ be an affine algebraic group over $\C$. Then there is a canonical isomorphism
\[\Charges(BG)\cong \C\otimes_\Z \Gamma_\Z(G)\]
between the vector space of complex linear forms on $BG$ and that of complex characters of $G$.

% Moreover, a central charge $Z$ on $BG$ determines and is determined by a map 
% \[\psi\colon \Gamma^\Z(G)\to \C\]
%  $G(\C)$-invariant for the conjugation action and such that, for $\lambda,\lambda'\in \Gamma^\Z(G)$ commuting cocharacters, we have $\psi(\lambda\lambda')=\psi(\lambda)+\psi(\lambda')$.
\end{lemma}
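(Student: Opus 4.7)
The plan is to construct explicit mutually inverse $\C$-linear maps between the two sides, exploiting the decomposition $\Grad(BG) \cong \bigsqcup_{[\lambda]} BZ_G(\lambda)$, where $[\lambda]$ ranges over $G$-conjugacy classes of cocharacters $\lambda\colon \Gbb_{m} \to G$ and $Z_G(\lambda)$ is the centraliser. Consequently $\abs{\Grad(BG)}$ is a discrete set, the local constancy condition in Definition \ref{definition: central charge on a stack} is automatic, and a central charge on $BG$ reduces to a $\C$-valued function on conjugacy classes of cocharacters satisfying the $\Z$-linearity axiom.

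The forward map sends a character $\chi \in \Gamma_\Z(G)$ to the function $Z_\chi(\lambda) = \langle \chi, \lambda \rangle \in \Z \subset \C$, where $\langle \chi, \lambda \rangle$ is the integer such that $\chi \circ \lambda(t) = t^{\langle \chi, \lambda\rangle}$, extended $\C$-linearly in $\chi$. Conjugation invariance is automatic, and the $\Z$-linearity axiom is immediate: for $g\colon B\Gbb_{m}^n \to BG$ classified by a homomorphism $\rho\colon \Gbb_{m}^n \to G$, the composition $\chi \circ \rho$ is a character of $\Gbb_{m}^n$, i.e.\ an element of $\Z^n$, and the induced map $\alpha \mapsto \langle \chi \circ \rho, \alpha\rangle$ is the standard bilinear pairing of $\Z^n$ with itself, hence $\Z$-linear in $\alpha \in \Gamma^\Z(\Gbb_{m}^n) = \Z^n$.

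For the inverse map, fix a maximal torus $T \subset G$. Given $Z \in \Charges(BG)$, the composition $B\Gbb_{m}^{\rk T} \cong BT \to BG$ yields, via the linearity axiom, a $\Z$-linear map $Z|_T \colon \Gamma^\Z(T) \to \C$, i.e.\ an element of $\C \otimes_\Z \Gamma_\Z(T)$. Since $Z$ is constant on $G$-conjugacy classes of cocharacters, $Z|_T$ is invariant under the Weyl group $W = N_G(T)/T$, and the standard identification $\Gamma_\Z(G) = \Gamma_\Z(T)^W$ (valid for any affine algebraic $G$ in characteristic zero via the Levi-Malcev decomposition, since characters are trivial on the unipotent radical) places $Z|_T$ in $\C \otimes_\Z \Gamma_\Z(G)$. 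The resulting character is independent of the choice of $T$ since it is a genuine character of $G$, so the construction is canonical.

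That the two maps are mutually inverse is then largely formal, modulo one structural input: every cocharacter of $G$ is conjugate to one factoring through $T$. This is the main obstacle but ultimately standard: any cocharacter has image in $G^\circ$ since $\Gbb_{m}$ is connected, descends modulo the unipotent radical (which admits no nontrivial cocharacters in characteristic zero), and is conjugate into $T$ by the conjugacy of maximal tori in connected reductive groups. Combined with the fact that characters are determined by their restriction to $T$, this forces both compositions to be identities, establishing the canonical isomorphism.
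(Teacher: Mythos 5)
Your overall route is the same as the paper's: decompose $\Grad(BG)\cong\bigsqcup BL(\lambda)$ over conjugacy classes of cocharacters (equivalently, Weyl orbits in $\Gamma^\Z(T)$), observe that $\abs{\Grad(BG)}$ is discrete so local constancy is vacuous, and identify central charges with $W$-invariant additive maps $\Gamma^\Z(T)\to\C$, i.e.\ with $\C\otimes_\Z\Gamma_\Z(T)^W$. Making the two maps explicit is fine, and your reduction of ``every cocharacter is conjugate into $T$'' is standard (indeed simpler than you make it: the image of a cocharacter is a torus, hence lies in a maximal torus, and maximal tori are conjugate).

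The genuine problem is the final identification. You assert the \emph{integral} equality $\Gamma_\Z(G)=\Gamma_\Z(T)^W$ ``via the Levi--Malcev decomposition''. This is false in general: for $G$ a nontrivial finite group (or, more relevantly, for disconnected reductive $G$) one has $T=1$, so $\Gamma_\Z(T)^W=0$, while $\Gamma_\Z(G)$ can be nonzero torsion; restriction $\Gamma_\Z(G)\to\Gamma_\Z(T)^W$ is in general neither injective nor surjective integrally. What is true, and what the lemma actually needs since the target is $\C\otimes_\Z\Gamma_\Z(G)$, is that the natural map $\Q\otimes_\Z\Gamma_\Z(G)\to\Q\otimes_\Z\Gamma_\Z(T)^W$ is an isomorphism; but for disconnected $G$ (where $W=N_G(T)/C_G(T)$ acts through outer components) this is not a formal consequence of Levi--Malcev plus conjugacy of maximal tori --- it is precisely the nontrivial input the paper imports from \cite{notecharacters}. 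So your argument is repairable by replacing the integral claim with the rational one and supplying (or citing) a proof of it, but as written this step does not hold.
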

\begin{proof}
Let $T$ be a maximal torus of $G$ and let $W=N_G(T)/C_G(T)$ be the associated Weyl group.
Let $C$ be a complete set of representatives of Weyl orbits in $\Gamma^\Z(T)$. Then by \cite[Theorem 1.4.8]{DHL} there is a canonical isomorphism
\[\Grad(BG)\cong\bigsqcup_{\lambda\in C} BL(\lambda),\]
Where $L(\lambda)$ is the centraliser of $\lambda$ in $G$. Therefore $\pi_0(\Grad(\cX))=\Gamma^\Z(T)/W$, and a complex linear form on  $BG$ is given by a map $Z\colon \Gamma^\Z(T)/W\to \C$. The linearity condition in Definition \ref{definition: stack of graded points} amounts to the composition
\[\begin{tikzcd}
\Gamma^\Z(T)\arrow[r] & \Gamma^\Z(T)/W \arrow[r,"Z"] & \C
\end{tikzcd}\]
being a homomorphism. Thus we have an isomorphism
\[\Charges(BG)=\Hom(\Gamma^\Z(T),\C)^W=\C\otimes_\Z \Gamma_\Z(T)^W.\]
The result follows from the fact that the natural map $\Q\otimes\Gamma_\Z(G)\to\Q\otimes_\Z\Gamma(T)_\Z^W$ is an isomorphism \cite{notecharacters}. \end{proof}

% Now denote $\overline T\leq L$ the image of $T$ in $L$, which is a maximal torus of $L$ isomorphic to $T$. Then $\Hom(\Gbb_m,T)/W=\Hom(\Gbb_m,G)/G(\C)=\Hom(\Gbb_m,L)/L(\C)$, and the Weyl group for $(G,T)$ is the same as the Weyl group for $(L,\overline T)$. Let $W'$ be the Weyl group for $(L^\circ,\overline T)$. Then $\C\otimes_Z\Hom(T,\Gbb_m)^{W}\subset \C\otimes_Z\Hom(\overline T,\Gbb_m)^{W'}\cong \Hom_{\Lie}(\mathfrak l,\C)$ because $L^\circ$ is connected and reductive. The third statement easily follows from this. %TODO pehaps add details

% Proof. 
% The second statement follows readily from the equality 
% \[\Hom(\Gbb_m,T)/W=\Gamma^\Z(G)/G(\C).\]
% \item \label{item: precise meaning Lie algebra character} Let $U$ be the unipotent radical of $G$, let $L=G/U$ and let $\fg$ and $\fl$ be the Lie algebras of $G$ and $L$, respectively. For $Z\in \Charges(BG)$ and $\psi\colon \Gamma^\Z(G)\to \C$ as above, there is a unique Lie algebra character $\overline \psi\colon \fl\to \C$ such that

% \[\begin{tikzcd}[ampersand replacement=\&]
%     {\Gamma^\Z(G)} \& \fg \& \fl \\
%     \& \C
%     \arrow["{\overline \psi}", from=1-3, to=2-2]
%     \arrow["\psi"', from=1-1, to=2-2]
%     \arrow["", from=1-1, to=1-2]
%     \arrow[from=1-2, to=1-3]
% \end{tikzcd}\]
% commutes. This defines an injection 
% \begin{tikzcd}[ampersand replacement=\&,column sep=scriptsize]
%     {\Charges(BG)} \& {\Hom_{\Lie}(\fl,\C)}.
%     \arrow[hook, from=1-1, to=1-2]
% \end{tikzcd}

Let us now denote $\cP=\Grad(\cX)(\C)$ the groupoid of $\C$-points of $\Grad(\cX)$, and $\cS$ the set of equivalence classes of $\cP$, namely $$\cS = \pi_0(\cP).$$ Thus $\cP$ has objects $(x,\lambda)$, where $x\colon \Spec \C\to \cX$ is a point and $\lambda\colon \Gbb_m\to G_x$ is a cocharacter of the stabiliser group $G_x$ of $x$. A map $(x,\lambda)\to (y,\mu)$ in $\cP$ is an isomorphism $g\colon x\to y$ such that $\mu=\lambda^g$, where $(-)^g\colon G_x\to G_y$ denotes the isomorphism that $g$ induces on automorphism groups by conjugation. From this we see that the set $\cS$ defined here coincides with that in Section \ref{stacks}. We now compare Definition \ref{definition: central charge stack main body} with Definition \ref{definition: central charge on a stack}.

\begin{lemma}\label{lemma: locally constant maps from Grad}
The data of a locally constant map $\abs{\Grad(\cX)}\to \C$ is equivalent to that of a map $Z\colon \cS\to \C$ satisfying that, for every connected finite type scheme $T$ over $\C$ and every map $B\Gbb_m\times Z\to \cX$, the composition
\[\begin{tikzcd}[ampersand replacement=\&]
    {T(\C)} \& \cS \& \C
    \arrow["{Z}", from=1-2, to=1-3]
    \arrow[from=1-1, to=1-2]
\end{tikzcd}\]
is constant.
\end{lemma}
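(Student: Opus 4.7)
The plan is to turn both sides of the equivalence into data on a smooth atlas. Fix once and for all a smooth surjective morphism $V = \bigsqcup_i V_i \to \Grad(\cX)$ with each $V_i$ a connected scheme of finite type over $\C$; such an atlas exists because $\Grad(\cX)$ is algebraic and locally of finite type over $\C$. Since $V \to \Grad(\cX)$ is smooth and surjective, the induced map $|V|\to|\Grad(\cX)|$ is continuous, open and surjective, so $|\Grad(\cX)|$ carries the quotient topology. Consequently, a function $f\colon|\Grad(\cX)|\to\C$ is locally constant if and only if its pullback to $|V|$ is constant on each connected component $|V_i|$ and descends along the groupoid $|V\times_{\Grad(\cX)}V|\rightrightarrows|V|$. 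I would also note at the start that $V\to\Grad(\cX)$ is surjective on $\C$-points up to equivalence, since fibres at $\C$-points are nonempty smooth algebraic spaces over the algebraically closed field $\C$ and hence have $\C$-points.

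The forward direction is then immediate: given a locally constant $f$, define $Z$ to be the composition $\cS\to|\Grad(\cX)|\xrightarrow{f}\C$. For any connected finite type $T\to\Grad(\cX)$ the image of $|T|$ in $|\Grad(\cX)|$ is a continuous image of a connected space, hence connected, so $f$ is constant on it, which gives the required constancy of $T(\C)\to\cS\to\C$.

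For the reverse direction, given $Z\colon\cS\to\C$ satisfying the hypothesis, I would apply the hypothesis to $T=V_i$ with its tautological map $V_i\hookrightarrow V\to\Grad(\cX)$ to produce a constant value $c_i\in\C$, and then define a locally constant function $f_V\colon|V|\to\C$ by $f_V|_{|V_i|}\equiv c_i$. The key step, and the main obstacle, is to show that $f_V$ descends along $|V|\to|\Grad(\cX)|$, i.e.\ that $f_V\circ p_1=f_V\circ p_2$ as maps $|R|\to\C$, where $R=V\times_{\Grad(\cX)}V$ and $p_1,p_2$ are the projections. Every connected component $W$ of $R$ is an open subalgebraic space, locally of finite type and nonempty over the algebraically closed field $\C$, so it admits a $\C$-point $w$; the two $\C$-points $p_1(w),p_2(w)\in V(\C)$ represent the same class in $\cS=\Grad(\cX)(\C)/\!\sim$ by the very definition of the fibre product over $\Grad(\cX)$. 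Since $W$ is connected each $p_k$ sends $W$ into a single component $V_{i_k(W)}$, so $c_{i_1(W)}=Z(p_1(w))=Z(p_2(w))=c_{i_2(W)}$, giving the desired equality. This produces the locally constant $f\colon|\Grad(\cX)|\to\C$ descending $f_V$.

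Finally, to check that the two constructions are mutually inverse one uses the surjectivity of $V(\C)\to\cS$ recorded above: given $s\in\cS$, pick a lift $v\in V_i(\C)$, so that $f(s)=f_V(v)=c_i=Z(s)$ by the construction of $c_i$; conversely, given $f$, the functions $f'_V$ built from $Z:=f|_\cS$ and the pullback $f|_V$ are both locally constant on $|V|$ and agree on the dense set of $\C$-points of each $V_i$, hence agree on $|V|$, and thus $f'=f$ on $|\Grad(\cX)|$ by quotient-topology uniqueness.
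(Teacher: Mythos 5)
Your proof is correct and follows essentially the same route as the paper's: both reduce to a smooth atlas by connected finite-type schemes over $\C$ and use that locally constant functions on such schemes are detected by their (nonempty sets of) $\C$-points. The paper packages this via the subspace topology on $\cS$ and density of $\C$-points in closed subsets, whereas you carry out the descent along $V\times_{\Grad(\cX)}V$ explicitly; the substance is the same.
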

This is precisely the equivariant constancy condition of Definition \ref{definition: central charge stack main body}. 
\begin{proof}
Under the natural injection $\cS\to \abs{\Grad(\cX)}$, $\cS$ is the set of points of $\abs{\Grad(\cX)}$ that can be realised by a map $\Spec \C\to \Grad(\cX)$, and thus $\cS$ inherits a topology from $\abs{\Grad(\cX)}$. 

For any closed subset $R$ of $\abs{\Grad(\cX)}$, the intersection $\cS\cap R$ is dense in $R$, so $\cS$ and $\abs{\Grad(\cX)}$ have the same connected components. For any morphism $T\to \Grad(\cX)$, where $T$ is a scheme over $\C$, the induced map $T(\C)\to \cS$ is continuous. Moreover, if $Y\to \Grad(\cX)$ is a smooth atlas, then the induced map $Y(\C)\to \cS$ is a submersion. Therefore, giving a locally constant map $\abs{\Grad(\cX)}\to \C$ is equivalent to giving a locally constant map $\cS\to \C$, which is in turn equivalent to giving a map $\cS\to \C$ such that, for every morphism $T\to \Grad(\cX)$ with $T$ a scheme of finite type over $\C$ and connected, the composition $T(\C)\to \cS\to \C$ is constant.
\end{proof}

\begin{proposition}\label{proposition: different ways to check linearity}
Let $Z\colon \abs{\Grad(\cX)}\to \C$ be a locally constant map. For a point $x\in \cX(\C)$, with stabiliser group $G_x$, we denote $\psi_x\colon \Gamma^\Z(G_x)\to \C$ the map induced by $Z$ and $x$. Then the following conditions are equivalent: 
\begin{enumerate}[(i)]
\item \label{item 0} The map $Z$ defines a complex linear form on $\cX$.
\item \label{item 1}For every $x\in \cX(\C)$, the map $\psi_x$ is induced by a (uniquely determined) complex character $\chi\in \C\otimes_\Z \Gamma_\Z(G_x)$ of $G_x$.
\item\label{item 2} For every $x\in \cX(\C)$, if $K_x$ is a maximal compact subgroup of $G_x$ and $\mathfrak{k}_x$ is the Lie algebra of $K_x$, then the map $\psi_x$ is induced by a (uniquely determined) complex Lie algebra character $\mathfrak k_x\to \C$.
\item\label{item 3} For every $x\in \cX(\C)$ and for all commuting cocharacters $\lambda,\lambda'\in \Gamma^\Z(G_x)$ we have $\psi_x(\lambda\lambda')=\psi_x(\lambda)+\psi_x(\lambda')$.
\end{enumerate}
\end{proposition}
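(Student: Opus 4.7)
The plan is to prove (i) $\Leftrightarrow$ (iv) directly from the definitions, and then to close the cycle via (iv) $\Rightarrow$ (ii) $\Rightarrow$ (iii) $\Rightarrow$ (iv). Uniqueness in (ii) and (iii) follows in each case from the standard fact that a character of the affine algebraic group $G_x$ (respectively a Lie algebra character of $\mathfrak{k}_x$) is determined by its values on the cocharacter lattice of a maximal torus (respectively on the derivatives of those cocharacters).

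For (i) $\Leftrightarrow$ (iv), I would unpack the definitions. A morphism $g\colon B\Gbb_m^n\to \cX$ is the data of a point $x\in\cX(\C)$ together with a group homomorphism $\rho\colon\Gbb_m^n\to G_x$, and under this identification the induced map $Z_g$ is the composition $\psi_x\circ\rho_*$ with $\rho_*\colon \Gamma^\Z(\Gbb_m^n)\to\Gamma^\Z(G_x)$. Demanding $\Z$-linearity of $Z_g$ for every such $g$ is therefore the same as requiring $\psi_x$ to be additive on every finite commuting family of cocharacters of $G_x$, which is precisely condition (iv).

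For (iv) $\Rightarrow$ (ii), I would first observe that $\psi_x$ is Weyl-invariant: conjugate cocharacters $\lambda$ and $h\lambda h^{-1}$ with $h\in G_x$ yield isomorphic objects in the groupoid $\Grad(\cX)(\C)$ and hence the same point of $|\Grad(\cX)|$. Fixing a maximal torus $T_x\subset G_x$ with Weyl group $W$, condition (iv) applied to the pairwise commuting cocharacters of $T_x$ shows that $\psi_x|_{\Gamma^\Z(T_x)}$ is $\Z$-linear, and the Weyl-invariance places it in $(\C\otimes\Gamma_\Z(T_x))^W$. The isomorphism $\C\otimes\Gamma_\Z(G_x)\cong(\C\otimes\Gamma_\Z(T_x))^W$ from the proof of Lemma \ref{lemma: central charges for BG} then supplies the desired character $\chi\in\C\otimes\Gamma_\Z(G_x)$. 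Since every cocharacter of $G_x$ is conjugate into $T_x$ and both $\psi_x$ and $\chi$ are Weyl-invariant, they agree on all of $\Gamma^\Z(G_x)$.

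For (ii) $\Leftrightarrow$ (iii), the bridge is differentiation at the identity: a character $\chi$ differentiates to a $\C$-linear Lie algebra character $d\chi\colon\mathfrak{g}_x\to\C$ whose restriction to $\mathfrak{k}_x$ is the required infinitesimal character, and the compatibility $\psi_x(\lambda)=d\chi(\dot\lambda(1))$ is the usual character--cocharacter pairing. Conversely, a Lie algebra character $\mathfrak{k}_x\to\C$ extends $\C$-linearly to $\Lie K_x^\C\to\C$ where $K_x^\C\subset G_x$ is the reductive complexification of the maximal compact, factors through the abelianisation, integrates to an algebraic character of $K_x^\C$, and extends trivially over the unipotent radical of $G_x$ to an element of $\C\otimes\Gamma_\Z(G_x)$. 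Finally (iii) $\Rightarrow$ (iv) is immediate, since the derivatives of commuting cocharacters lie in a common abelian subalgebra of $\mathfrak{k}_x$ and a Lie algebra character is additive. The principal technical step, and the only one not essentially routine, is the use in (iv) $\Rightarrow$ (ii) of the isomorphism $\C\otimes\Gamma_\Z(G_x)\cong(\C\otimes\Gamma_\Z(T_x))^W$ to pass from a function on the cocharacter lattice of a torus to an honest character of the possibly non-reductive stabiliser $G_x$.
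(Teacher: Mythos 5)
Your argument is correct, and up to the ordering of the implications it is essentially the paper's: the paper runs the single cycle (i) $\Rightarrow$ (ii) $\Rightarrow$ (iii) $\Rightarrow$ (iv) $\Rightarrow$ (i), obtaining (ii) by pulling $Z$ back along the monomorphism $BG_x \to \cX$ and invoking Lemma \ref{lemma: central charges for BG}, whereas you prove (i) $\Leftrightarrow$ (iv) directly from the factorisation of maps $B\Gbb_m^n \to \cX$ through $BG_x$ and then inline the content of that lemma (conjugation-invariance of $\psi_x$, restriction to a maximal torus $T_x$, Weyl-invariance, and the isomorphism $\C\otimes_\Z\Gamma_\Z(G_x)\cong(\C\otimes_\Z\Gamma_\Z(T_x))^W$ of \cite{notecharacters}) to get (iv) $\Rightarrow$ (ii). The decisive external input and the differentiation argument for (ii) $\Rightarrow$ (iii) are the same in both treatments; your direct proof of (i) $\Leftrightarrow$ (iv) is a mild shortcut, since the paper only obtains (i) $\Rightarrow$ (iv) by going around the cycle.

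One subsidiary claim in your write-up is false as stated, though it does not invalidate the whole because it concerns an implication your cycle does not need. In the converse half of your paragraph on (ii) $\Leftrightarrow$ (iii) you assert that an arbitrary complex Lie algebra character $\mfk_x \to \C$ integrates (after extending over the unipotent radical) to an element of $\C\otimes_\Z\Gamma_\Z(G_x)$. This fails when $G_x$ is disconnected: for $G_x = \Gbb_m\rtimes\Z/2$ with the involution acting by inversion, every linear functional on the one-dimensional Lie algebra is a Lie algebra character, yet every character of $G_x$ is trivial on $\Gbb_m$, so $\C\otimes_\Z\Gamma_\Z(G_x)=0$. The implication (iii) $\Rightarrow$ (ii) is still true, but only because condition (iii) demands that the conjugation-invariant function $\psi_x$ be induced by the Lie algebra character, which forces the character to be invariant under the relevant component-group action (in the example above it forces $\psi_x=0$); the clean route is (iii) $\Rightarrow$ (iv) $\Rightarrow$ (ii), which your cycle already provides, so the direct integration argument should simply be deleted.
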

\begin{proof}
If $Z$ is a  complex linear form on $\X$ and $x\in \cX(\C)$, then there is an induced monomorphism $\iota\colon BG_x\to \cX$ where $G_x$ is the stabiliser group of $x$. The pullback $\iota^*Z$, that is, the composition
\[\begin{tikzcd}[ampersand replacement=\&,column sep=large]
    {\abs{\Grad(BG_x)}} \& {\abs{\Grad(\cX)}} \& \C,
    \arrow["{\abs{\Grad(\iota)}}", from=1-1, to=1-2]
    \arrow["Z", from=1-2, to=1-3]
\end{tikzcd}\]
is a complex linear form on  $BG_x$. It follows from Lemma \ref{lemma: central charges for BG} that $\psi_x$ is given by a complex character $\chi\in \C\otimes_\Z \Gamma_\Z(G_x)$ that is uniquely determined. Therefore (\ref{item 0}) implies (\ref{item 1}).

Now fix $x\in \cX(\C)$. Let $U$ be the unipotent radical of $G_x$ and $L=G_x/U$, which is reductive. Let $\fl$ be the Lie algebra of $L$. Then for any maximal compact subgroup $K_x$ of $G_x$, the composition $K_x\to G_x\to L$ exhibits $L$ as the complexification of $K_x$. Therefore, the Lie algebra $\fl$ of $L$ equals $\C\otimes_\R \mathfrak k_x$, and thus a homomorphism $\mathfrak k_x\to \C$ of real Lie algebras is the same data as a homomorphism $\mathfrak l \to \C$ of complex Lie algebras. Any character $G_x\to \Gbb_m$ factors through $G_x\to L$ and, taking the differential of the induced $L\to \Gbb_m$, it gives a Lie algebra character $\mathfrak l\to \C$. This gives a homomorphism $\Gamma_\Z(G_x)\to \Hom_{\text{Lie}}(\mathfrak l,\C)$ and, by extending scalars, a map $r\colon \C\otimes_Z\Gamma_\Z(G_x)\to \Hom_{\text{Lie}}(\mathfrak l,\C)$. 

An element $\chi \in \C\otimes_Z\Gamma_\Z(G_x)$ gives a pairing map $\langle -,\chi\rangle\colon \Gamma^\Z(G_x)\to \C$. Similarly, a Lie algebra character $\alpha\in \Hom_{\text{Lie}}(\mathfrak l,\C)$ gives a map $\langle -,\alpha\rangle\colon \Gamma^\Z(G_x)\to \C$ as follows. If $\lambda\colon \Gbb_m\to G_x$ is a one-parameter subgroup, then $\langle \lambda, \alpha\rangle$ is the composition of $\Lie(\Gbb_m \xrightarrow{\lambda} G_x\to L)$ and $\alpha$, which is a linear map $\C\to \C$ and thus identified with a complex number. Both pairings are compatible in the sense that $\langle -,\chi\rangle=\langle -,r(\chi)\rangle$ for all $\chi\in \C\otimes_\Z\Gamma_\Z(G_x)$. Therefore, if $\psi_x=\langle -,\chi\rangle$ for some $\chi$, then it also equals $\psi_x=\langle -,r(\chi)\rangle$ and it is thus induced by a Lie algebra character $r(\chi)\colon \mathfrak l=\C\otimes_\R\mathfrak k_x\to \C$, which is uniquely determined because the image of the map $\Gamma^\Z(L)\to \mathfrak l\colon \lambda\to \Lie(\lambda)(1)$ spans $\mathfrak l$ by reductivity of $L$.
This shows that (\ref{item 1}) implies (\ref{item 2}).

Any Lie algebra character respects addition, so it is clear that (\ref{item 2}) implies (\ref{item 3}).

If $g\colon B\Gbb_m^n\to \cX$ is a map, then there is a point $x\in \cX(\C)$ such that $g$ factors as $B\Gbb_m^n\to BG_x\to \cX$. Thus the map $Z_g\colon \Gamma^\Z(\Gbb_m^n)\to \C$ induced by $g$ and $Z$ factors through the map $\psi_x\colon \Gamma^\Z(G_x)\to \C$ induced by $Z$ and $x$. Therefore $Z_g$ is additive for all $g$ if (\ref{item 3}) is satisfied for all $x$, and thus (\ref{item 3}) implies (\ref{item 0}).\end{proof}

\begin{remark}\label{equivalence of definitions of central charge}
Together, Lemma \ref{lemma: locally constant maps from Grad} and Proposition \ref{proposition: different ways to check linearity} establish that Definitions \ref{definition: central charge stack main body} and \ref{definition: central charge on a stack} are equivalent.
\end{remark}

\begin{remark}\label{connection}
If $\cX=X/G$ is a quotient stack, then we can describe
\[\cS=\{(x,\lambda)\colon x\in X(\C),\quad \lambda\colon \Gbb_m\to G_x\}/\sim,\]
where $(x,\lambda)\sim(y,\mu)$ if there is $g\in G(\C)$ such that $y=gx$ and $\mu=\lambda^g$.

Therefore, Definition \ref{definition: central charge stack equivariant main body} of a central charge for the $G$-scheme $X$ is equivalent to Definition \ref{definition: central charge on a stack} of a complex linear form on  $X/G$.
\end{remark}

\end{document}